\newtheorem{theorem}{Theorem}[section]
\newtheorem{lemma}[theorem]{Lemma}
\newtheorem{proposition}[theorem]{Proposition}
\newtheorem{corollary}[theorem]{Corollary}
\newtheorem{definition}{Definition}
\newtheorem{remark}{Remark}
\newtheorem{conjecture}{Conjecture}
\newtheorem{question}{Question}
\numberwithin{equation}{section}
\DeclareMathOperator{\Conv}{Conv}
\DeclareMathOperator{\dist}{dist} 
\begin{document}

\title{Regular Polygonal Partitions of a Tverberg Type}

\author{Leah Leiner}
\author{Steven Simon} 
\address{Bard College, Department of Mathematics}
\email{lleiner@bard.edu}
\email{ssimon@bard.edu}

\begin{abstract}  A seminal theorem of Tverberg states that any set of $T(r,d)=(r-1)(d+1)+1$ points in $\mathbb{R}^d$ can be partitioned into $r$ subsets whose convex hulls have non-empty $r$-fold intersection.  Almost any collection of fewer points in $\mathbb{R}^d$ cannot be so divided, and in these cases we ask if the set can nonetheless be $P(r,d)$--partitioned, i.e., split into $r$ subsets so that there exist $r$ points, one from each resulting convex hull, which form the vertex set of a prescribed convex $d$--polytope $P(r,d)$. Our main theorem shows that this is the case for any generic $T(r,2)-2$ points in the plane and any $r\geq 3$ when $P(r,2)=P_r$ is a regular $r$--gon, and moreover that $T(r,2)-2$ is tight. For higher dimensional polytopes and $r=r_1\cdots r_k$, $r_i \geq 3$, this generalizes to $T(r,2k)-2k$ generic points in $\mathbb{R}^{2k}$ and orthogonal products $P(r,2k)=P_{r_1}\times \cdots \times P_{r_k}$ of regular polygons, and likewise to $T(2r,2k+1)-(2k+1)$ points in $\mathbb{R}^{2k+1}$ and the product polytopes $P(2r,2k+1)=P_{r_1}\times \cdots \times P_{r_k} \times P_2$. As with Tverberg's original theorem, our results admit topological generalizations when $r$ is a prime power, and, using the ``constraint method" of Blagojevi\'c, Frick, and Ziegler, allow for dimensionally restricted versions of a van Kampen--Flores type and colored analogues in the fashion of Sober\'on.\end{abstract} 
\maketitle

\section{Introduction and Statement of Main Results} 

Tverberg's landmark 1966 theorem [28] states that any set of $T(r,d):=(r-1)(d+1) +1$ points in $\mathbb{R}^d$ can be divided into $r$ pairwise disjoint subsets whose convex hulls have non-empty $r$-fold intersection (called a Tverberg $r$-partition). The $r = 2$ case recovers Radon's Theorem, and the result also has deep connections with the classical theorems of Helly and Carath\'eodory. We refer the reader to the recent surveys [5, 9, 10] for a sampling of the many interesting applications and extensions of Tverberg's theorem in discrete geometry, combinatorics, topology, and beyond. 

	For codimension reasons, almost any collection of $N$ points fails to admit a Tverberg $r$-partition when $N$ is less than the Tverberg number $T(r,d)$. In these cases one may consider weaker  conditions on the convex hulls arising from partitions by $r$ subsets. The most studied problem in this direction (see, e.g., [1, 20]) was initiated by Reay [21], and  for each $2\leq j<r$ asks whether there exists some $N<T(r,d)$ such that any generic set of $N$ points in $\mathbb{R}^d$ can be partitioned into $r$ subsets such that while any $j$ of the resulting convex hulls have common intersection, all $r$ of them do not. In fact, Reay's ``relaxed" conjecture  claims that no such $N$ exists, even when $j=2$. Instead of partial intersection conditions on convex hulls, we introduce the following polytopal variant with a slight Ramsey flavor: 

\begin{question} \label{quest1} Let $n\leq d$ and let $P(r,n)$ be a $n$--dimensional convex polytope with $r$ vertices. What is the minimum $N:=N_{(P(r,n);d)}$ such that almost any $N$ points in $\mathbb{R}^d$ be partitioned into $r$ sets $A_1,\ldots, A_r$ such that there exist $r$ points $x_1\in \Conv(A_1),\ldots, x_r\in \Conv(A_r)$ which are the vertices of a convex polytope similar to $P(r,n)$?  \end{question} 
 
 Any partition as in Question \ref{quest1} will be called a \textit{$P(r,n)$--partition}, or simply a \textit{polytopal partition} if the context is clear. We denote $N_{(P(r,n);d)}$ by $N_{P(r,d)}$ in the special case that $n=d$. Our central result determines $N_{P(r,2)}$ for all regular polygons $P_r=P(r,2)$ in terms of the Tverberg number:
 
 \begin{theorem}\label{thm1.1} $N_{P_r}=T(r,2)-2=3r-4$ for all regular $r$-gons $P_r$.\end{theorem}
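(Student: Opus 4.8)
The plan is to derive the upper bound $N_{P_r}\le 3r-4$ from the Colorful Carath\'eodory theorem via a Sarkaria-type tensor reduction, and the matching lower bound from a dimension count. Identify $\mathbb{R}^2$ with $\mathbb{C}$ and put $\omega=e^{2\pi i/r}$. To exhibit a $P_r$--partition of points $v_1,\dots,v_N\in\mathbb{C}$ it suffices to produce a partition $[N]=A_1\sqcup\dots\sqcup A_r$ into nonempty sets and scalars $a,b\in\mathbb{C}$ with $b\ne0$ such that $a+b\,\omega^j\in\Conv(A_j)$ for all $j$; the points $a+b\,\omega^j$ are then the vertices of a regular $r$--gon, hence similar to $P_r$. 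Writing each such point as $\sum_{i\in A_j}(\nu_i/\lambda_j)v_i$ with $\nu_i\ge0$, $\lambda_j=\sum_{i\in A_j}\nu_i$, and setting $\zeta_j=\sum_{i\in A_j}\nu_iv_i=\lambda_j(a+b\,\omega^j)$, the requirement becomes the \emph{linear} condition that $\lambda_1=\dots=\lambda_r$ and that $(\zeta_1,\dots,\zeta_r)$ lies in the complex plane $\mathrm{span}\{\mathbf 1,(\omega^j)_j\}\subset\mathbb{C}^r$ without being a multiple of $\mathbf 1$.

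For the reduction, choose $p_j=(\omega^{-2j},\omega^{-3j},\dots,\omega^{-(r-1)j})\in\mathbb{C}^{r-2}$, so that $\sum_j\zeta_jp_j$ records the Fourier coefficients of $(\zeta_j)_j$ at the frequencies $2,\dots,r-1$; thus the map $(\zeta_j)_j\mapsto\sum_j\zeta_jp_j$ has kernel exactly $\mathrm{span}\{\mathbf 1,(\omega^j)_j\}$, and $\sum_jp_j=0$. Choose $u_1,\dots,u_r\in\mathbb{R}^{r-1}$ to be the vertices of a simplex with $0$ in its interior, so $\sum_ju_j=0$ and any $r-1$ of the $u_j$ are independent. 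Set
\[
\widehat v_i^{\,(j)}=\bigl(v_i\,p_j,\;u_j\bigr)\in\mathbb{C}^{r-2}\oplus\mathbb{R}^{r-1}\cong\mathbb{R}^{3r-5},\qquad C_i=\{\widehat v_i^{\,(1)},\dots,\widehat v_i^{\,(r)}\}\quad(i\in[N]).
\]
Because $\tfrac1r\sum_j\widehat v_i^{\,(j)}=0$, each of the $N=3r-4=(3r-5)+1$ sets $C_i\subset\mathbb{R}^{3r-5}$ has $0$ in its convex hull, so the Colorful Carath\'eodory theorem provides a choice $c\colon[N]\to[r]$ and weights $\nu_i\ge0$, $\sum_i\nu_i=1$, with $\sum_i\nu_i\widehat v_i^{\,(c(i))}=0$. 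The $u$--block of this identity reads $\sum_j\lambda_ju_j=0$ with $\lambda_j=\sum_{c(i)=j}\nu_i$, forcing $\lambda_1=\dots=\lambda_r=1/r$, so every part $A_j=c^{-1}(j)$ is nonempty; the $p$--block reads $\sum_j\zeta_jp_j=0$ with $\zeta_j=\sum_{c(i)=j}\nu_iv_i$, forcing $\zeta_j=a+b\,\omega^j$ for some $a,b\in\mathbb{C}$. Then $x_j:=\zeta_j/\lambda_j=ra+rb\,\omega^j$ is a convex combination of $\{v_i:i\in A_j\}$, hence $x_j\in\Conv(A_j)$, and $A_1,\dots,A_r$ is a $P_r$--partition provided $b\ne0$.

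The step I expect to be the main obstacle is handling \emph{genericity}, which is precisely where the hypothesis ``almost any'' and the exact threshold $3r-4$ enter. One must rule out the degenerate output $b=0$: in that case all $x_j$ equal $ra$, so $ra\in\bigcap_j\Conv(A_j)$ and $A_1,\dots,A_r$ is a Tverberg $r$--partition. I would exclude this by a standard incidence-variety count: for a fixed partition, the configurations admitting a common point of the hulls form the image under projection of a variety of dimension $3N+2-3r$, which is strictly less than $2N$ exactly when $N<T(r,2)$; as there are finitely many partitions, a generic configuration of $3r-4$ points has no Tverberg $r$--partition, so necessarily $b\ne0$ and $N_{P_r}\le 3r-4$. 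Running the same count with the four-dimensional family of pairs $(a,b)\in\mathbb{C}^2$ in place of the common point, the relevant variety for a fixed partition of $N=3r-5$ points has dimension $3N+4-3r<2N$, so a generic configuration of $3r-5$ points admits no inscribed regular $r$--gon for \emph{any} partition; hence $N_{P_r}>3r-5$, and together $N_{P_r}=T(r,2)-2=3r-4$. The remaining items --- checking $\sum_jp_j=\sum_ju_j=0$ and that the kernel of $(\zeta_j)_j\mapsto\sum_j\zeta_jp_j$ is as claimed, and making the two dimension counts precise --- are routine.
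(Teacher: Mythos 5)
Your proposal is correct and is essentially the paper's proof: both encode ``the images form a regular $r$-gon'' as vanishing of the Fourier coefficients of $(\zeta_j)_j$ at frequencies $2,\dots,r-1$, build the auxiliary configuration in $\mathbb{C}^{r-2}\oplus\mathbb{R}^{r-1}$ via the Sarkaria tensor trick, and invoke B\'ar\'any's Colorful Carath\'eodory theorem (the paper does this indirectly, through Sarkaria's Linear Borsuk--Ulam Theorem \ref{thm3.2}, which is itself a corollary of Colorful Carath\'eodory). Your handling of the two remaining issues also matches the paper's: genericity rules out the degenerate case $b=0$ by excluding a Tverberg $r$-partition when $N<T(r,2)$, and the tightness $N_{P_r}>3r-5$ follows from the same dimension/transversality count that the paper performs in the target space $W$.
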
 	 
 
 The figure below gives the two possible cases of Theorem \ref{thm1.1} when $r=3$, in which either one or two of the vertices of the equilateral triangle come from a given set of 5 (blue) points: 
\begin{center} \includegraphics[scale=.55]{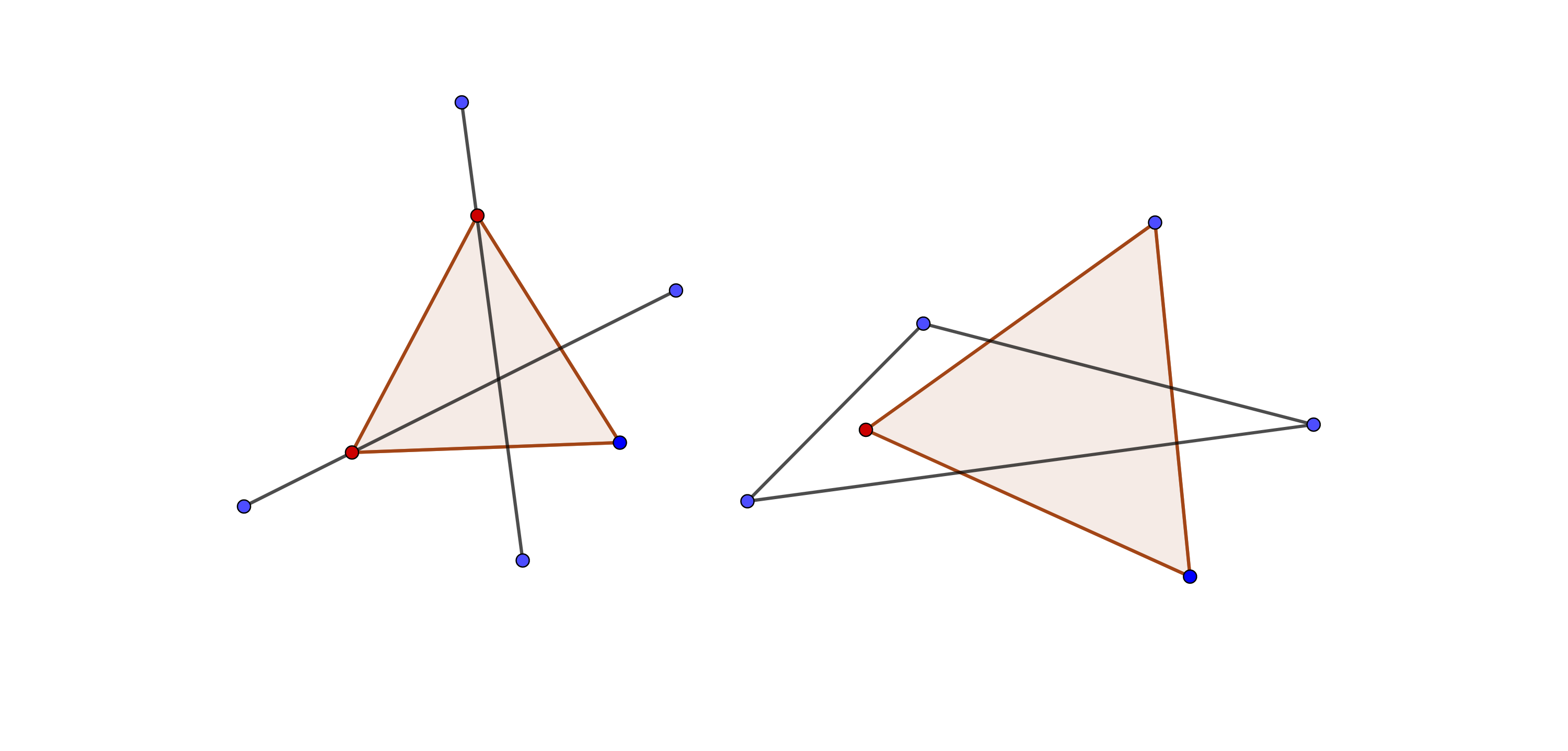}\end{center} 
\vspace*{-.275in} 

For higher dimensional polytopes, we give upper bounds for $N_{P(r,d)}$ in the case of ``multiprisms", i.e., the Cartesian products \begin{equation}\label{eq:1.1} P(r,2k):=P_{r_1}\times \cdots \times P_{r_k} \end{equation} of $k$ pairwise orthogonal (two--dimensional) regular $r_i$-gons in $\mathbb{R}^{2k}$, $r_i\geq 3$, as well as for the pairwise orthogonal products 
								 \begin{equation}\label{eq:1.2} P(2r,2k+1):=P_{r_1}\times \cdots \times P_{r_k} \times P_2 \end{equation} in $\mathbb{R}^{2k+1}$. Here $P_2$ denote a line segment, so that for  example $P(2r,3)=P_r\times P_2$ is a right regular prism in $\mathbb{R}^3$.
								 
\begin{theorem}\label{thm1.2} Let $k\geq 1$ and let $r=r_1\cdots r_k$, $r_i\geq 3$. Let $P(r,2k)=P_{r_1}\times \cdots \times P_{r_k}$ and $P(2r,2k+1)=P_{r_1}\times \cdots \times P_{r_k} \times P_2$. Then \begin{equation}\label{eq:1.3}  N_{P(r,2k)}\leq T(r,2k)-2k\,\,\, \text{and}\,\,\, N_{P(2r,2k+1)}\leq T(2r,2k+1)-2k-1.\end{equation}
\noindent Moreover,\\
(a) Let $S$ be a set of $T(r,2k)-2k$ generic points in $\mathbb{R}^{2k}$. For any decomposition of $\mathbb{R}^{2k}$ by $k$ pairwise orthogonal coordinate planes $U_1,\ldots, U_k$,  there exists a $P(r,2k)$--partition of $S$ such that $P_{r_i}$ is parallel to $U_i$ for all $1\leq i\leq k$. Likewise,\\
(b) Let $S$ be a set of $T(2r,2k+1)-2k-1$  generic points in $\mathbb{R}^{2k+1}$. For any $k$ pairwise orthogonal coordinate planes $U_1,\ldots, U_k$ in $\mathbb{R}^{2k+1}$, there exists a $P(2r,2k+1)$--partition of $S$ such that $P_{r_i}$ is parallel to $U_i$ for all $1\leq i\leq k$.
\end{theorem}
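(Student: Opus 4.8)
The plan is to obtain \eqref{eq:1.3} together with the refinements (a) and (b) from a single Configuration Space / Test Map argument that parallels (and, for $k=1$, recovers) the proof of Theorem \ref{thm1.1}; the parallelism-to-$U_i$ conditions will be forced by the \emph{choice} of test map, not proved separately. I describe the even case in detail. Fix generic $S=\{p_1,\dots,p_N\}\subset\mathbb{R}^{2k}$ with $N=T(r,2k)-2k$, and a decomposition $\mathbb{R}^{2k}=U_1\oplus\cdots\oplus U_k$ into pairwise orthogonal coordinate planes, each identified with $\mathbb{C}$, with orthogonal projections $\pi_i\colon\mathbb{R}^{2k}\to U_i$. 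Index the $r=r_1\cdots r_k$ prospective parts by $J=(j_1,\dots,j_k)\in\mathbb{Z}_{r_1}\times\cdots\times\mathbb{Z}_{r_k}$ and let $G=\mathbb{Z}_{r_1}\times\cdots\times\mathbb{Z}_{r_k}$ act on this index set by (free) translation, and on $\mathbb{R}^{2k}$ by letting the $i$-th factor rotate $U_i$ through $2\pi/r_i$ and fix every other $U_l$. The configuration space is the $r$-fold $2$-wise deleted join $X=(\Delta^{N-1})^{*r}_{\Delta}\cong([r])^{*N}$, a free $G$-complex of dimension $N-1$ that is $(N-2)$-connected; a point of $X$ records an ordered partition $A_J=\supp(\mu_J)$ of a subset of $S$ together with weights $t_J\ge0$, $\sum_J t_J=1$, and the points $x_J=\sum_a\mu_{J,a}\,a\in\Conv(A_J)$.

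Next I build a $G$-equivariant test map $f\colon X\to W=W_0\oplus\bigoplus_{i=1}^k W^{(i)}$ (using the usual device of carrying $t_J$-weighted points so $f$ stays defined where a weight vanishes) so that $f(\xi)=0$ precisely when all $r$ parts of $\xi$ are active with equal weight and the $x_J$ form a (possibly degenerate) product of regular polygons with the $i$-th factor parallel to $U_i$. Here $W_0=\{s\in\mathbb{R}^r:\sum_J s_J=0\}$ is the reduced permutation representation and the corresponding component of $f$ is $(t_J-\tfrac1r)_J$, which detects that every part is used; the summand $W^{(i)}$ governs the $i$-th polygonal factor, with one block recording how far $J\mapsto z_J:=\pi_i(x_J)$ is from factoring through $j_i$ (the sums of $z_J$ over each $j_i$-fiber, valued in $\bigoplus_{j_i}\{v\in\mathbb{C}^{r/r_i}:\sum v=0\}$) and one block recording the discrete Fourier coefficients $\sum_{j_i\in\mathbb{Z}_{r_i}}\omega_{r_i}^{-m j_i}z^{(i)}_{j_i}$ for $m=2,\dots,r_i-1$, which vanish exactly when the cyclically indexed $(z^{(i)}_0,\dots,z^{(i)}_{r_i-1})$ are the vertices of a (possibly degenerate) regular $r_i$-gon. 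Equivariance is the compatibility of the $\mathbb{Z}_{r_i}$-rotation of $U_i$ with cyclic relabelling of the $i$-th index, and one checks $W^G=0$. The whole scheme is calibrated by
\[
\dim W=(r-1)+\sum_{i=1}^k\bigl(2(r-r_i)+(2r_i-4)\bigr)=(r-1)+k(2r-4)=r(2k+1)-4k-1=N-1,
\]
so that $\dim X=\dim S(W)+1=\operatorname{conn}(X)+1$.

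When $r$, equivalently each $r_i$, is a power of a fixed prime $p$, the group $G$ is a $p$-group acting freely on the $(N-2)$-connected complex $X$; since $W^G=0$ and $\dim W=N-1$, Dold's theorem (or the Fadell--Husseini index) forbids a $G$-map $X\to S(W)$, so $f$ has a zero. For general $r$ with all $r_i\ge3$ this topological input is unavailable (and presumably false, as for the topological Tverberg theorem), and instead I would run the \emph{affine} analogue of Sarkaria's tensor trick: lift each $p_i$ to $\widehat{p}_i=(p_i,1)$, tensor it against vectors $b_J$ whose entries are the nontrivial characters of $\mathbb{Z}_{r_1}\times\cdots\times\mathbb{Z}_{r_k}$ other than the $k$ fundamental ones, exploiting the reality of the weight coordinates to discard conjugate characters, arranged so the point set lands in $\mathbb{R}^{N-1}$, whereupon a rainbow selection furnished by B\'ar\'any's colourful Carath\'eodory theorem translates back into the desired partition. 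Forcing this tensor construction into exactly $\mathbb{R}^{N-1}$, rather than a space larger by a lower-order term, is the crux of the argument and the step where the factorization $r=r_1\cdots r_k$ and the bound $r_i\ge3$ are genuinely used; I expect it to be the main obstacle.

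Finally, a zero of $f$ gives an ordered partition of $S$ into $r$ nonempty parts (distributing any unused points of $S$ arbitrarily) and points $x_J\in\Conv(A_J)$ whose $\pi_i$-images depend only on $j_i$ and form a regular $r_i$-gon parallel to $U_i$, hence are the vertices of a product $P_{r_1}\times\cdots\times P_{r_k}$ with $P_{r_i}\parallel U_i$; it remains to rule out degenerate factors by a general-position argument showing that configurations $S$ admitting only such degenerate solutions form a nowhere dense set (the subtlety I expect to need the most care when $k\ge2$, since then the coordinate-plane projections of $S$ have far more than $T(r,2)$ points and so do admit ordinary Tverberg partitions, unlike when $k=1$). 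The inequalities $N_{P(r,2k)}\le T(r,2k)-2k$ and $N_{P(2r,2k+1)}\le T(2r,2k+1)-2k-1$ are then immediate from (a) and (b), and the odd-dimensional assertions follow by the same scheme with $G=\mathbb{Z}_{r_1}\times\cdots\times\mathbb{Z}_{r_k}\times\mathbb{Z}_2$, the extra $\mathbb{Z}_2$ swapping the two parts of the $P_2$-factor while acting by negation on the last coordinate axis, its block of $f$ carrying no Fourier conditions but only the requirement that the last coordinate of $x_J$ depend on the $\mathbb{Z}_2$-label.
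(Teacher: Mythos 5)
Your configuration-space/test-map setup is, up to bookkeeping, the same Fourier-analytic scheme the paper uses: your block ``$z_J$ factors through $j_i$'' enforces $c_{i,h}=0$ for all $h$ with a nonzero component outside position $i$, and your ``discrete Fourier coefficients $m=2,\dots,r_i-1$'' block kills $c_{i,g\mathbf{e}_i}$ for $g\neq0,1$; together these are precisely the paper's conditions $c_{i,h}=0$ for $h\in G-\{0,\mathbf e_i\}$ from Lemma \ref{lem2.1}, and your dimension count $\dim W=(r-1)+k(2r-4)=N-1$ agrees with $2m+r-1$ in Theorem \ref{thm3.1} (with $m=k(r-2)$, $m'=0$). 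So the framework and arithmetic are correct. However, you leave the heart of the statement --- the \emph{affine} case for all $r=r_1\cdots r_k$ --- as ``the main obstacle,'' and this is where the proposal has a genuine gap. Your Dold/Fadell--Husseini step handles only prime powers and is really the content of Theorem \ref{thm1.3}, not Theorem \ref{thm1.2}. The worry you raise about the tensor construction landing ``in a space larger by a lower-order term'' does not materialize: Sarkaria's Linear Borsuk--Ulam theorem (the paper's Theorem \ref{thm3.2}, a corollary of B\'ar\'any's colorful Carath\'eodory) applies to \emph{any} finite group $G$ and \emph{any} fixed-point-free $N$-dimensional representation $W$ together with an affine $G$-equivariant $A:G^{\ast(N+1)}\to W$, and the representation you built has exactly the right dimension. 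What is actually needed (and what the paper supplies in the proof of Theorem \ref{thm3.1}) is a verification that the map $A=\mathcal A\circ\iota$ is affine on $G^{\ast(N+1)}$ and equivariant, after which a zero exists with no dimensional slack.

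The other soft spot is non-degeneracy: you defer to a ``nowhere dense set'' of bad configurations. This needs to be made quantitative, and your own observation about $k\ge2$ (that projections of $S$ to a coordinate plane have far more than $T(r,2)$ points, so planar Tverberg partitions abound) shows why a vague general-position claim is insufficient: one must rule out the specific collection $\{x_g\}$ produced by the equivariant zero having $c_{i,\mathbf e_i}=0$, not merely that some coincidence occurs. The paper's Definition \ref{def1} (Fourier genericity) is exactly the right condition: if additionally $c_{i,\mathbf e_i}=0$ then $m$ increases by one, so the required simplex dimension exceeds $N-1$, and the second half of Theorem \ref{thm3.1} (the tightness direction, again counting independent linear conditions) shows such a zero cannot occur for generic affine $f$. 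You should replace your density heuristic with this count.
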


 	For example, Theorem \ref{thm1.2}(b) guarantees that for any generic set of $8r-6$ points in $\mathbb{R}^3$, there exist $2r$ points, one form each convex hull determined by a  partition by $2r$ subsets, which are the vertices of a right regular prism whose $r$-gon base can be prescribed parallel to any of the three coordinate planes.\\
	
	Although parts (a) and (b) of Theorem \ref{thm1.2} are tight under the given coordinate conditions, this should not be the case for the upper bound (\ref{eq:1.3}) itself. To see this, consider the $2(d-2)$-dimensional  Grassmanian manifold $G_2(\mathbb{R}^d)$ of all linear 2-flats in $\mathbb{R}^d$. Allowing non--coordinate multiprisms, there are $\dim \Pi_{i=0}^{k-1} G_2(\mathbb{R}^{2k-2i})=2k(k-1)$ remaining degrees of freedom for the existence of any $P(r,2k)$--partition, and subtracting this from $N=T(r,2k)-2k$ above for coordinate multiprisms yields an expected value of $N_{P(r,2k)}=T(r,2k)-2k^2$. Similar remarks lead one to expect $N_{P(2r,2k+1)} =T(2r,2k+1)-2k(k+1)-1$.
\\	
	
	As with Tverberg's theorem, topological extensions of Question \ref{quest1} arise by viewing $N$ points in $\mathbb{R}^d$ as the image of the vertices of the $(N-1)$-simplex  $\Delta_{N-1}$ under a map $f:\Delta_{N-1}\rightarrow \mathbb{R}^d$. Considering affine linear maps, a Tverberg $r$-partition of a set of $N$ points is equivalent to the existence of $r$ pairwise disjoint faces $\sigma_1,\ldots, \sigma_r \subseteq \Delta_{N-1}$ such that $\cap_{i=1}^r f(\sigma_i)\neq \emptyset$. Likewise, a $P(r,n)$-partition is equivalent to finding points $x_1\in \sigma_1\,\ldots, x_r\in \sigma_r$ from pairwise disjoint $\sigma_i$ so that the $f(x_i)$ are the vertices some $P(r,n)$. It is the content of the Topological Tverberg theorem of [19] (see also [23, 29]) that affine maps may be replaced by arbitrary continuous ones for all prime powers $r$ when $N=T(r,d)$. On the other hand, counterexamples to such topological extensions of Tverberg's theorem for all other $r$ were recently produced [6, 12] based on  fundamental work of [15]. We refer the reader to the reviews of [5, 9, 10] amongst others for the history of this very famous problem initially raised in [4]. By analogy with the Topological Tverberg theorem, we have topological $P(r,2k)$--partitions for all odd prime powers $r=p^k$, as well as for $k$--orthotopes $P(2^k,k):=P_2^{\times k}$ in $\mathbb{R}^k$. 
			
\begin{theorem}\label{thm1.3} $\newline$
(a) Let $N=T(p^k,2k)-2k$, $p$ an odd prime, and let $f: \Delta_{N-1}\rightarrow \mathbb{R}^{2k}$ be a generic continuous map. For any decomposition of $\mathbb{R}^{2k}$ by $k$ pairwise orthogonal coordinate planes $U_1,\ldots, U_k$, there exist points $x_1\in \sigma_1,\ldots, x_r \in\sigma_{p^k}$ from pairwise disjoint faces so that  $f(x_1),\ldots, f(x_{p^k})$ form the vertex set of a  multiprism $P(r,2k)=P_p^{\times k}$ whose regular $p$-gons are parallel to the $U_i$. This also holds for $r=4$ when $k=1$.\\
(b)  Let $N=T(2^k,k)-k$ and let $f: \Delta_{N-1}\rightarrow \mathbb{R}^k$ be a generic continuous map.  Then there exist points $x_1\in \sigma_1,\ldots, x_{2^k}\in \sigma_{2^k}$ from pairwise disjoint faces so that $f(x_1),\ldots, f(x_{2^k})$ are the vertices of a $k$-orthotope $P_2^{\times k}$ whose edges are parallel to the coordinate axes. 
\end{theorem}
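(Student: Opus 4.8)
The plan is to run a configuration space/test map scheme of Tverberg type in which the \emph{product} structure of $P(r,2k)=P_p^{\times k}$ (respectively $P(2^k,k)=P_2^{\times k}$) is encoded by the discrete Fourier transform over the group $G=(\mathbb Z/p)^k$ (respectively $G=(\mathbb Z/2)^k$), with $r=p^k$ (resp.\ $2^k$). As in the topological Tverberg theorem, a choice of points $x_a\in\sigma_a$ from pairwise disjoint faces of $\Delta_{N-1}$ ($a$ running over $G$, identified with $\{1,\dots,r\}$) is a point of the $r$-fold $2$-wise deleted join $(\Delta_{N-1})^{*r}_{\Delta}$, which is canonically isomorphic to the $N$-fold join $[r]^{*N}$ of an $r$-point set; here $G\hookrightarrow\mathfrak S_r$ acts freely by left translation of the $r$ layers. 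Writing a point of the deleted join by its join weights $t_a\ge 0$, $\sum_a t_a=1$, and its partial barycenters $y_a\in\Delta_{N-1}$ (defined when $t_a>0$), identifying $\mathbb R^{2k}$ with $\mathbb C^k=U_1\oplus\cdots\oplus U_k$ in case (a) (and keeping $\mathbb R^k$ in case (b)), and setting $z_a=f(y_a)$, I would define the test map
$$F=\Big(\ \big(t_a-\tfrac1r\big)_{a\in G}\ ,\ \big(\widehat z_{i}(\chi)\big)_{1\le i\le k,\ \chi\in\widehat G\setminus\{0,e_i\}}\ \Big),\qquad \widehat z_{i}(\chi)=\sum_{a\in G}t_a\,z_{a,i}\,\overline{\chi(a)},$$
with the convention $t_a z_{a,i}=0$ when $t_a=0$, where $z_{a,i}$ is the $i$-th coordinate of $z_a$ and $e_i\in\widehat G$ is the $i$-th coordinate character. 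For fixed $i$ these are precisely the Fourier coefficients that vanish if and only if $a\mapsto z_{a,i}$ has the form $c_i+\rho_i\,\omega_i^{\,a_i}$ with $\omega_i$ a primitive $p$-th (resp.\ square) root of unity. A short check shows $F$ is continuous and $G$-equivariant: the first block carries the standard permutation representation $W_r=\{x\in\mathbb R^r:\textstyle\sum_a x_a=0\}$, while translation by $g$ multiplies the $(i,\chi)$-component by $\overline{\chi(g)}$; hence $F$ maps $G$-equivariantly into $V:=W_r\oplus V'$, where $V'$ is a direct sum of $k(r-2)$ nontrivial one-dimensional $G$-representations, complex in case (a) and real in case (b).

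The decisive point is a dimension count: one verifies $\dim_{\mathbb R}V=(r-1)+2k(r-2)=N-1$ in case (a) and $\dim_{\mathbb R}V=(r-1)+k(r-2)=N-1$ in case (b), exactly matching $\dim(\Delta_{N-1})^{*r}_\Delta=N-1$. Granting this, I would read off that a zero of $F$ produces the desired configuration: vanishing of the first block forces $t_a=1/r>0$ for all $a$, so the $\sigma_a$ are genuinely $r$ pairwise disjoint faces; vanishing of the second block makes, for each $i$, the function $a\mapsto z_{a,i}$ on $G$ supported on the characters $\{0,e_i\}$, i.e.\ $z_{a,i}=c_i+\rho_i\,\omega_i^{\,a_i}$, so that $\{z_a:a\in G\}$ is the vertex set of a translate $c+P_p^{\times k}$ (resp.\ $c+P_2^{\times k}$) with the $i$-th factor parallel to $U_i$, provided each $\rho_i\ne 0$. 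To secure non-degeneracy I would use genericity of $f$: for each fixed $i$, augmenting $F$ by the single omitted coordinate $\widehat z_i(e_i)$ gives a map whose target has dimension strictly larger than $N-1$, so by a standard transversality argument --- perturbing $f$ independently near points lying in disjoint, hence affinely separated, faces --- no zero of the generic $F$ has $\rho_i=0$; ranging over $i$ shows every zero of $F$ yields an honest product polytope.

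It remains to show $F$ has a zero, and this is exactly the equivariant input of the prime-power case of the topological Tverberg theorem. The deleted join $[r]^{*N}$ is $(N-2)$-connected and carries a \emph{free} action of $G=(\mathbb Z/p)^k$ (this is precisely where $r$ being a prime power is used); the representation $V$ has $V^G=0$ since every character occurring is nontrivial, so its equivariant Euler class $e(V)\in H^{N-1}(BG;\mathbb F_p)$ is a product of the nonzero Euler classes of those characters and is therefore nonzero in the polynomial part of $H^{*}(BG;\mathbb F_p)$. A zero-free $F$ would yield a $G$-map $(\Delta_{N-1})^{*r}_\Delta\to S(V)$, under which $e(V)$ restricts to $0$; but $(N-2)$-connectedness makes the edge homomorphism $H^{*}(BG;\mathbb F_p)\to H^{*}_G\big((\Delta_{N-1})^{*r}_\Delta\big)$ injective in degrees $\le N-1=\dim V$, a contradiction. (This is Volovikov's lemma, i.e.\ the cohomological argument that proves the topological Tverberg theorem for prime powers; when $k=1$ all nontrivial characters of $\mathbb Z/p$ are faithful, $S(V)$ is then a free $G$-sphere of dimension $N-2$, and the classical Dold theorem applies directly.) Finally, the case $r=4$, $k=1$ of (a) is the square $P_4=P_2^{\times2}$ and is subsumed in (b) with $k=2$, while (b) with $k=1$ is Radon's theorem. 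The main obstacle is not the equivariant topology, which is the prime-power Tverberg argument verbatim, but the design of the test map: one needs its zero set to be exactly the family of coordinate-aligned multiprisms and its target representation to have dimension precisely $N-1$ with no trivial summand, and it is the Fourier transform on $G$ that makes these two requirements compatible.
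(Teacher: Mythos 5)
Your approach for the prime-power cases $G=(\mathbb Z/p)^k$ is essentially the paper's: construct a Fourier-coefficient test map, put it into a target representation with no trivial summand, and apply Volovikov's lemma (the paper uses the deleted product $(\Delta_N)^{\times G}_\Delta$ where the join weights are automatically positive, so no $W_r$ block is needed; you use the deleted join $G^{*N}$ with the augmentation block, which is the configuration the paper uses in the affine setting, but the dimension bookkeeping comes out the same). Your non-degeneracy argument via transversality is looser than the paper's formal ``Fourier generiticity'' condition (Definition~\ref{def1}), but it is the same idea.

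The genuine gap is in the final sentence, where you dismiss the $r=4$, $k=1$ case by claiming $P_4=P_2^{\times 2}$ is ``subsumed in (b) with $k=2$.'' This is wrong on two counts. First, $P_4$ is a regular $4$-gon, i.e.\ a square, whereas $P_2\times P_2$ is a rectangle with uncontrolled aspect ratio; part (b) only yields a coordinate-aligned rectangle, never a square. Second, and more structurally, the Fourier decomposition encoding a square uses the cyclic group $\mathbb Z/4$ (vanishing of $c_{1,2}$ and $c_{1,3}$ in the $\mathbb Z/4$ characters), and $\mathbb Z/4$ is not elementary abelian, so Volovikov's lemma does not apply to it and your equivariant Euler-class argument breaks down. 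This is exactly why the theorem is stated only for odd primes $p$ together with the single exceptional case $r=4$: for $r=4$ the paper invokes a separate polynomial/degree criterion ([25, Theorem 3.2] with $d=1$, showing $q(y)=(r-1)!\,y^{r-2}\neq 0$ in $\mathbb Z[y]/(ry)$ iff $r$ is prime or $r=4$), and no analogue of your argument covers it. As written, your proposal proves strictly less than the theorem.
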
 

See Definition \ref{def1} below for our precise notion of topological (and affine) generiticity. As discussed in Remark \ref{rem1} following this definition, our generiticity condition holds for almost every affine map and remains typical in the continuous setting. In particular, it includes all maps $f:\Delta_{N-1}\rightarrow \mathbb{R}^d$ with $N<T(r,d)$ which do not admit a Tverberg $r$-partition.\\

	The remainder of this paper proceeds as follows. Theorems 1.1--1.3 have equivalent formulations in terms of Fourier analysis on finite abelian groups, so that the existence of prescribed $P(r,d)$--partitions are equivalent to the annihilation of certain Fourier coefficients.  This perspective was first introduced to Tverberg--type problems by the second author in [25] and is discussed in Section 2. In Section 3, we show that nearly arbitrarily prescribed coefficients can be forced to vanish in the affine setting (Theorem \ref{thm3.1}) provided generically tight dimensional considerations are met. This is ultimately derived as a consequence of Sarkaria's ``Linear Borsuk--Ulam Theorem" [23], which is itself a corollary of B\'ar\'any's colored Carath\'eodory Theorem [2]. As we show in Section 4, Theorem \ref{thm1.2} quickly follows, as do two tight extensions of Theorem \ref{thm1.1} to higher dimensions (Theorem \ref{thm4.1}): one for parametrized $P_r$--partitions in all dimensions and the other for $P_r$--partitions in even dimensions for which $P_r$ is constrained to lie in a complex 1--flat. In particular, one has  the upper bound $N_{(P_r;d)}\leq T(r,d)-d$ for even $d$. 
	
	While one must be careful to guarantee the vanishing of desired Fourier coefficients when $f$ is continuous (see, e.g., the polynomial criteria [25, Theorem 3.2]), coefficients can be annihilated just as freely as in the affine setting provided one considers elementary abelian groups of prime power order, so that Theorem \ref{thm1.3} follows from standard equivariant cohomological techniques discussed in Section 5. In Section 6, we give two examples of how the ``constraint" method of [7] (and implicitly [13]) can be applied to our framework. These produce regular polygonal partitions (i) of a van Kampen--Flores type for odd primes (Theorem \ref{thm6.2}), including when $N=T(r,d)$ (Corollary \ref{cor6.3}),  as well as (ii) a colored variant in the mode of Sober\'on [23] (Theorem \ref{thm6.5}). For (i), we observe a connection between our results and those surrounding the famous square peg problem initiated by Toeplitz [27], while the $r=3$ case of (ii) matches the dimension of the B\'ar\'any--Larman Conjecture [3]. We conclude in Section 7 with a return to Question \ref{quest1} when $P(r,r-1)=\Delta_{r-1}$ is a regular $(r-1)$-simplex. While standard methods yield $N_{(\Delta_2;d)}=5$ for all $d\geq2$ (Proposition \ref{prop7.1}), an observation we owe to Florian Frick, we show that this approach fails for all $r\geq 4$, including when $r$ is a prime power.

\section{A Fourier Analytic Approach} 

Following [25], Fourier analysis on finite groups can be applied to any map $f=(f_1,\ldots, f_d) :\Delta_N\rightarrow \mathbb{C}^d$ (including when any of the $f_i$ are real--valued) by indexing each collection of $r$ points $x_1\in \sigma_1,\ldots, x_r\in \sigma_r$ from pairwise disjoint faces by a fixed group $G$ of order $r$. For each coordinate map $f_i$ and any such $\{x_g\}_{g\in G}\subset \Delta_N$, evaluation of $f_i$ defines the function

 \begin{equation} \label{eq:2.1} F_i: G\rightarrow \mathbb{C}, \,\,\, g\mapsto f_i(x_g),\end{equation} 
  which has a Fourier decomposition arising from the complex representation theory of $G$. When $G=\oplus_{j=1}^k \mathbb{Z}_{r_j}$ is abelian, this takes a simple form owing to the fact that the irreducible representations are all one-dimensional and indexed by the group itself. Explicitly, each $\chi_h: G\rightarrow \mathbb{C}^\times$ is given by $\chi_h(g)=\Pi_{j=1}^k \zeta_{r_j}^{h_jg_j}$, where $h=(h_1,\ldots, h_k), g=(g_1,\ldots, g_k) \in G$ and $\zeta_r=e^{2\pi i/r}$ is the standard $r$-th root of unity. The characters $\chi_h$ form an orthonormal basis for the space of all functions $H: G\rightarrow \mathbb{C}$ under the standard inner product $\langle H_1, H_2\rangle = \frac{1}{|G|}\sum_{g\in G} H_1(g)\overline{H_2}(g)$ for each $H_1,H_2: G\rightarrow \mathbb{C}$ (see, e.g., [24]), so that each $F_i$ above can be uniquely expressed as 
  
				\begin{equation} \label{eq:2.2} F_i =\sum_{h\in G} c_{i,h} \chi_h, \end{equation} where
				 \begin{equation}\label{eq:2.3} c_{i,h}=\langle F_i, \chi_h\rangle=\frac{1}{|G|}\sum_{g\in G} f_i(x_g)\chi_h^{-1}(g)\end{equation} is the Fourier coefficient corresponding to $\chi_h$.\\
				
	As $\chi_0=1$, it follows immediately from (\ref{eq:2.2}) that $F_i$ is constant iff $c_{i,h}=0$ for all $h\neq 0$. In particular, a Tverberg $r$-partition is equivalent to the vanishing of all Fourier coefficients not arising from the trivial representation.\\
	
	For the multiprism partitions of Theorem \ref{thm1.2} and \ref{thm1.3}(a), we will see in Section 4 that it suffices to consider the special case where the $U_1,\ldots, U_k$ are the coordinate complex planes of $\mathbb{C}^k$, the Fourier characterization of which is given below (the situation for Theorem 1.3(b) is analogous, see Section 5). As usual, we let $\mathbb{Z}_r^\times$ denote the units of the ring $\mathbb{Z}_r$ and $\mathbf{e}_i$  the $i$-th standard basis vector of $G=\mathbb{Z}_{r_1}\oplus \cdots \oplus \mathbb{Z}_{r_k}$.

 \begin{lemma}\label{lem2.1} Let $r=r_1,\ldots, r_k$, $r_i\geq 3$, let $G=\oplus_{i=1}^k \mathbb{Z}_{r_i}$, and let $G'=G\oplus \mathbb{Z}_2$.\\
(a) Let $f: \Delta_N\rightarrow \mathbb{C}^k$, and let $\{x_g\}_{g\in G}\subset \Delta_N$ be a collection of points from pairwise disjoint faces. Then $\{f(x_g)\}_{g\in G}$ is the vertex set of a multiprism $P(r,2k)=P_{r_1}\times \cdots \times P_{r_k}$ with each $P_{r_i}$ parallel to the $i$-th coordinate plane of $\mathbb{C}^k$ iff for each $1\leq i \leq k$, there exists some $g_{i_0}\in \mathbb{Z}^\times_{r_i}$ such that  (i) $c_{i,g_{i_0}\mathbf{e}_i}\neq 0$ and (ii) $c_{i,g}=0$ for all $g\in G-\{0, g_{i_0}\mathbf{e}_i\}$ in the Fourier expansion (\ref{eq:2.2}). Likewise,\\
(b)  Let $f: \Delta_N\rightarrow \mathbb{C}^k\oplus\mathbb{R}$, and let $\{x_{g'}\}_{g'\in G'}\subset\Delta_N$ be a collection of points from pairwise disjoint faces. Then $\{f(x_{g'})\}_{g'\in G'}$ is the vertex set of a multiprism $P(2r, 2k+1)=P_{r_1}\times \cdots \times P_{r_k}\times P_2$ with each $P_{r_i}$ parallel to the $i$-th coordinate plane of $\mathbb{C}^k$ iff for each $1\leq i \leq k+1$ there exists some $g'_{i_0}\in \mathbb{Z}^\times_{r_i}$ such that  (i) $c_{i,g'_{i_0}\mathbf{e}_i}\neq 0$ and (ii) $c_{i,g'}=0$ for all $g'\in G'-\{0, g_{j'_0}\mathbf{e}_i\}$ in the Fourier expansion (\ref{eq:2.2}).
\end{lemma}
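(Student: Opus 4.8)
The plan is to unpack the Fourier vanishing conditions into an explicit coordinatewise description of the values $f_i(x_g)$ and read the multiprism off directly. I will carry out part (a) and observe that part (b) is the same argument with the extra $\mathbb{Z}_2$--factor handled via $\zeta_2=-1$.

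For the ``if'' direction of (a), assume that for each $i$ there is a unit $g_{i_0}\in\mathbb{Z}^\times_{r_i}$ with $c_{i,g_{i_0}\mathbf{e}_i}\neq 0$ and $c_{i,g}=0$ for all $g\in G-\{0,g_{i_0}\mathbf{e}_i\}$. Then (\ref{eq:2.2}) collapses to $F_i=c_{i,0}\chi_0+c_{i,g_{i_0}\mathbf{e}_i}\chi_{g_{i_0}\mathbf{e}_i}$, so that, writing $g=(g_1,\dots,g_k)$ and using $\chi_{g_{i_0}\mathbf{e}_i}(g)=\zeta_{r_i}^{g_{i_0}g_i}$,
\[
f_i(x_g)=a_i+w_i\,\zeta_{r_i}^{\,g_{i_0}g_i},\qquad a_i:=c_{i,0},\quad w_i:=c_{i,g_{i_0}\mathbf{e}_i}\neq0 .
\]
Because $g_{i_0}\in\mathbb{Z}^\times_{r_i}$, the map $g_i\mapsto g_{i_0}g_i$ is a bijection of $\mathbb{Z}_{r_i}$, so the $i$-th coordinate of $f(x_g)$ runs over the full set $V_i=\{a_i+w_i\zeta_{r_i}^m:m\in\mathbb{Z}_{r_i}\}$, the vertex set of a regular $r_i$-gon, genuinely two-dimensional since $w_i\neq0$ and $r_i\geq3$, lying in the plane through $a_i$ parallel to the $i$-th coordinate plane of $\mathbb{C}^k$. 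As $f_i(x_g)$ depends only on $g_i$ and the coordinates vary independently, $\{f(x_g)\}_{g\in G}=V_1\times\cdots\times V_k$; since the vertices of a product of polytopes are the products of their vertices, this is precisely the vertex set of $P_{r_1}\times\cdots\times P_{r_k}$ with $P_{r_i}$ parallel to the $i$-th coordinate plane.

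For the ``only if'' direction, suppose $\{f(x_g)\}_{g\in G}$ is the vertex set of such a multiprism. Its projection to the $i$-th coordinate is then the vertex set of a regular $r_i$-gon $V_i=\{a_i+w_i\zeta_{r_i}^m\}$ with $w_i\neq0$, and the $|G|=\prod r_i$ points $f(x_g)$ enumerate $V_1\times\cdots\times V_k$ bijectively. Re-indexing the collection $\{x_g\}_{g\in G}$ --- which leaves the point set unchanged --- so that the enumeration respects the product structure across factors and a rotational labeling within each factor, we get $f_i(x_g)=a_i+w_i\zeta_{r_i}^{g_i}$, and reading off Fourier coefficients gives $c_{i,0}=a_i$, $c_{i,\mathbf{e}_i}=w_i\neq0$, $c_{i,g}=0$ otherwise, i.e. the stated condition with $g_{i_0}=1$. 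Part (b) is identical: for $1\leq i\leq k$ the conditions give $f_i(x_{g'})=a_i+w_i\zeta_{r_i}^{g'_{i_0}g_i}$ as above, while for $i=k+1$ (where $\mathbb{Z}^\times_2=\{1\}$ forces $g'_{(k+1)_0}=1$) they give $f_{k+1}(x_{g'})=b+t(-1)^{\varepsilon}$ with $t=c_{k+1,\mathbf{e}_{k+1}}\neq0$ real, so that $\{f(x_{g'})\}_{g'\in G'}=V_1\times\cdots\times V_k\times\{b-t,\,b+t\}$ is the vertex set of $P_{r_1}\times\cdots\times P_{r_k}\times P_2$.

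There is no serious obstacle here once the Fourier dictionary of Section 2 is in place; the only point needing care is the ``only if'' direction, where the enumeration of the multiprism's vertices must be chosen affine (equivalently, rotational up to a $\mathbb{Z}^\times_{r_i}$-scaling) in each factor, since an arbitrary bijection $G\to V_1\times\cdots\times V_k$ need not satisfy the Fourier vanishing --- automatic for $r_i=3$, but not for larger $r_i$. The unit hypothesis $g_{i_0}\in\mathbb{Z}^\times_{r_i}$ is exactly what both directions require: a non-unit would make $g_i\mapsto g_{i_0}g_i$ non-injective, collapsing the $i$-th factor to fewer than $r_i$ points and hence destroying both the vertex count of the multiprism and the bijectivity of $g\mapsto f(x_g)$.
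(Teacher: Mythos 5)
Your ``if'' direction is correct and tracks the paper's proof: collapse the Fourier series to $f_i(x_g)=c_{i,0}+c_{i,g_{i_0}\mathbf{e}_i}\zeta_{r_i}^{g_{i_0}g_i}$, note that $g_{i_0}\in\mathbb{Z}_{r_i}^\times$ makes $\zeta_{r_i}^{g_{i_0}}$ primitive, and conclude each coordinate traces a regular $r_i$-gon. The paper even phrases it the same way, observing that $\{f(x_g)\}$ is the stated multiprism vertex set exactly when each $\{f_i(x_g)\}$ is the vertex set of a regular $r_i$-gon.

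The ``only if'' direction is where there is a real gap in your argument. You re-index the collection $\{x_g\}_{g\in G}$ so that the enumeration respects the product structure, and then read off the Fourier coefficients of the relabeled family. But the Fourier coefficients $c_{i,h}=\frac{1}{|G|}\sum_{g}f_i(x_g)\chi_h^{-1}(g)$ are tied to the labeling $g\mapsto x_g$: evaluating $\chi_h$ at a permuted index is precisely what detects the relabeling, so after your re-indexing you have proved that \emph{some} $G$-labeling of the multiprism's vertices satisfies (i)--(ii), not that the given $\{x_g\}_{g\in G}$ does. That is a strictly weaker statement than the lemma as written. The paper's own ``only if'' proof is terse --- it appeals to orthogonality of characters to assert $f_i(x_g)=c_{i,0}+c_{i,g_{i_0}\mathbf{e}_i}\zeta_{r_i}^{g_{i_0}g_i}$ outright --- but it does not introduce a relabeling step; if you believe, as you indicate, that such a step is needed, you should either say so explicitly as an amendment to the lemma's statement or verify that the downstream uses (the ``if'' direction in Theorems~\ref{thm1.2} and~\ref{thm1.3}, and the optimality claim which permits choosing a convenient labeling) do not require the version with a fixed labeling. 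Simply re-indexing inside the proof without flagging the change of Fourier data is not a proof of the statement.

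Finally, your parenthetical ``automatic for $r_i=3$'' is not correct once $k\geq 2$: with $G=\mathbb{Z}_3\oplus\mathbb{Z}_3$, the labeling $f_1(x_{(g_1,g_2)})=\zeta_3^{g_1+g_2}$ yields $\{f_1(x_g)\}$ equal to the vertex set of a regular triangle, yet the only nonzero nonconstant coefficient is $c_{1,(1,1)}$, not one of the form $c_{1,g_{1_0}\mathbf{e}_1}$, so condition (ii) fails. The point you raise --- that an arbitrary bijection $G\to V_1\times\cdots\times V_k$ need not respect the product-and-rotational structure --- is a genuine one, but it already bites at $r_i=3$ once there is more than one factor.
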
 

\begin{proof} For part (a), consider some collection $\{x_g\}_{g\in G}$ from pairwise disjoint faces. The $f(x_g)=(f_1(x_g),\ldots, f_k(x_g))$ are the vertices of a $P(r,2k)$ with each $P_{r_i}$ parallel to the $i$-th coordinate plane iff  each $\{f_i(x_g)\}_{g\in G}$ is a regular $r_i$-gon in $\mathbb{C}$. For each $1\leq i \leq k$, the Fourier decomposition (\ref{eq:2.2}) of each $f_i$ gives $f_i(x_g)=c_0 + \sum_{h\neq 0} c_{i,h} \chi_h(g)$. As $\chi_{g_{i_0}\mathbf{e}_i}(g)=\zeta_{r_i}^{g_{i_0}g_i}$, one has $f_i(x_g)=c_{i,0}+ c_{i, g_0}\zeta_{r_i}^{g_{i_0}g_i}$ if $c_{i,g}=0$ for all $g\in G-\{0,g_{i_0}\mathbf{e}_i\}$.  If $g_{i_0}\in \mathbb{Z}_{r_i}^\times$, then $\zeta_{r_i}^{g_{i_0}}$ is a primitive $r_i$-th root of unity, and hence $\{f_i(x_g)\}_{g\in G}$ is the vertex set of a regular $r_i$-gon, provided in addition that $c_{g_{i_0}\mathbf{e}_i}\neq 0$. Conversely,  it follows from the orthogonality of characters that  if $\{f_i(x_g)\}_{g\in G}$ is the vertex set of a regular $r_i$-gon, then $f_i(x_g)=c_{i,0} +c_{i, g_{i_0}\mathbf{e}_i}\zeta_{r_i}^{g_{i_0}g_i}$ for some $g_{i_0}\in \mathbb{Z}_{r_i}^\times$. Thus all Fourier coefficients other than $c_{i,0}$ and $c_{i,g_{i_0}\mathbf{e}_i}$ vanish, and $c_{g_{i_0}\mathbf{e}_i}\neq 0$.

	The proof for the multiprisms $P(2r,2k+1)=P(r,2k)\times P_2$ of part (b) is identical, except now $\{f_{k+1}(x_{g'})\}_{g'\in G}\subset \mathbb{R}$ are the endpoints of a segment.
\end{proof}

\section{Annihilating Coefficients in the Affine Setting} 

Theorem \ref{thm1.2} follows once it is ensured that the coefficient conditions prescribed in Lemma \ref{lem2.1} are met. This will follow from Theorem \ref{thm3.1} below, which in essence guarantees that outside of trivial restrictions any collection of prescribed Fourier coefficients can be simultaneously annihilated once the dimension of the simplex is sufficiently large. 

	Before providing the formal statement of Theorem \ref{thm3.1}, we provide some motivation and intuition. Namely, suppose that $F_i: G\rightarrow \mathbb{C}$ is as in (\ref{eq:2.1}) above, and that, for a given a subset $S_i$ of $G$, we wish to annihilate all  coefficients $c_{i,h}$ in the Fourier expansion (\ref{eq:2.2}) with $h$ from $S_i$.  This will be shown to possible so long as each $h$ is non-zero and $N$ is large enough. If $f_i$  is real--valued, one should additionally consider whether or not some of the $h$ have order two. This consideration follows from the easily verified fact that $\overline{c_{i,h}}=c_{i,-h}$ for any $h\in G$ when $f_i$ is real--valued, so that $c_{i,h}$ vanishes iff $c_{i,-h}$ does. In order to avoid redundancy in annihilation, and in particular to ensure the optimality of $N$ below, we therefore stipulate below that $-h$ be excluded from $S_i$ whenever $h$ is in $S_i$ and $|h|>2$. Of course, no redundancy arises when $|h|=2$. Additionally, observe that each Fourier coefficient $c_{i,h}$ is real--valued when $f_i$ is real--valued and $h$ has order two.
		  
\begin{theorem}\label{thm3.1} Let $f:\Delta_N\rightarrow \mathbb{C}^d\oplus \mathbb{R}^{d'}$ be an affine map, where $d+d'\geq 1$ and $d,d'\geq 0$. Let $G=\mathbb{Z}_{r_1}\oplus \cdots \oplus \mathbb{Z}_{r_k}$, $r_i\geq 2$, and let $r=r_1\cdots r_k$. For each $1\leq i \leq d+d'$, suppose that $S_i\subseteq G-\{0\}$ and let $m_i=|S_i|$. For each $d+1\leq i\leq d+d'$, let $T_i=\{h\in S_i\mid |h|=2\}$ and let $|T_i|=m'_i$. Moreover, suppose that each $S_i$ has the property that $-h\notin S_i$ if $h\in S_i-T_i$. Finally, let $m=\sum_{i=1}^{d+d'}m_i$ and let $m'=\sum_{i=d+1}^{d+d'} m_i'$. If $N=2m-m'+ r-1$, then there exists some $\{x_g\}_{g\in G}\subset\Delta_N$ with pairwise disjoint support such that $c_{i,h}=0$ in the Fourier expansions (\ref{eq:2.2})  for all $h\in S_i$ and all $1\leq i \leq d+d'$. Moreover, the existence of some $\{x_g\}_{g\in G}$ with pairwise disjoint support such that $c_{i,h}=0$ in (\ref{eq:2.2}) for all $h\in S_i$ and all $1\leq i \leq d+d'$ fails for almost any affine map $f$ if $N<2m-m'+r-1$.  
\end{theorem}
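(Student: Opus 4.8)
The plan is to handle the existence assertion ($N=2m-m'+r-1$) and the optimality assertion ($N<2m-m'+r-1$) separately, recasting the first as a single instance of B\'ar\'any's colored Carath\'eodory theorem --- equivalently, Sarkaria's Linear Borsuk--Ulam theorem --- applied to a tensor construction in the spirit of Sarkaria. Write $p_v:=f(\delta_v)\in\mathbb{C}^d\oplus\mathbb{R}^{d'}$ for the images of the $N+1$ vertices of $\Delta_N$; affineness of $f$ gives $f(x_g)=\sum_v\lambda_{g,v}p_v$ when $x_g=\sum_v\lambda_{g,v}\delta_v$, so by (\ref{eq:2.3}) a collection $\{x_g\}_{g\in G}$ with pairwise disjoint support annihilates $c_{i,h}$ for every $h\in S_i$ and every $i$ precisely when $\sum_{g,v}\lambda_{g,v}\overline{\chi_h(g)}(p_v)_i=0$ for all such pairs $(i,h)$. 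I would encode this homogeneous system together with the normalizations $\sum_v\lambda_{g,v}=1$ inside the real vector space $W:=\mathbb{R}^{2m-m'}\oplus\mathbb{R}^{r-1}$, whose dimension is exactly $N$ by hypothesis. The first summand is coordinatized by the pairs $(i,h)$ with $h\in S_i$, recording the real part --- and, unless $i>d$ and $|h|=2$, also the imaginary part --- of $\overline{\chi_h(g)}(p_v)_i$; this quantity is real when $i>d$ and $|h|=2$ since then $\chi_h(g)\in\{\pm1\}$, and the count $\sum_{i\le d}2m_i+\sum_{i>d}\bigl(m_i'+2(m_i-m_i')\bigr)=2m-m'$ is what makes the dimensions match --- this is exactly where the stipulation $-h\notin S_i$ for $h\in S_i-T_i$ is used. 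The second summand carries $w_g$, the $g$-th vertex of a regular $(r-1)$-simplex centered at the origin, so $\sum_g w_g=0$ and the $w_g$ are affinely independent.

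Next I would set $y_{v,g}\in W$ to be the vector with those two blocks, put $C_v:=\{y_{v,g}:g\in G\}$, and check that each color class $C_v$ contains the origin in its convex hull: averaging over $g\in G$ annihilates the second block by centering, and the $(i,h)$-coordinate of the first block because $\sum_g\overline{\chi_h(g)}=\sum_g\chi_{-h}(g)=0$ by orthogonality of characters ($h\neq0$). There are $N+1=\dim W+1$ classes in $W\cong\mathbb{R}^N$, so colored Carath\'eodory yields a rainbow map $\gamma\colon\{0,\dots,N\}\to G$ and weights $\mu_v\ge0$ with $\sum_v\mu_v=1$ and $\sum_v\mu_v\,y_{v,\gamma(v)}=0$. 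The second block gives $\sum_g\bigl(\sum_{v:\gamma(v)=g}\mu_v\bigr)w_g=0$, which expresses the barycenter of the affinely independent $w_g$ and so forces $\sum_{v:\gamma(v)=g}\mu_v=1/r$ for every $g$; hence the sets $A_g:=\{v:\gamma(v)=g\}$ are pairwise disjoint and $x_g:=\sum_{v\in A_g}r\mu_v\,\delta_v$ lies in $\Delta_N$ with disjoint support. Finally the first block gives $\sum_v\mu_v\overline{\chi_h(\gamma(v))}(p_v)_i=0$, i.e. $c_{i,h}=0$, for every prescribed pair, finishing the existence half.

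For optimality, assume $N<2m-m'+r-1$, so $(N+1)-r<2m-m'$. A valid $\{x_g\}$ would place, for one of the finitely many choices of pairwise disjoint nonempty $(A_g)_{g\in G}$ in $\{0,\dots,N\}$, a point of the polytope of all $(x_g)_g$ with $\supp(x_g)\subseteq A_g$ --- of dimension $\sum_g(|A_g|-1)\le(N+1)-r$ --- in the kernel of the linear map sending $(x_g)_g$, $x_g=\sum_v\lambda_{g,v}\delta_v$, to $\bigl(\sum_{g,v}\lambda_{g,v}\overline{\chi_h(g)}(p_v)_i\bigr)_{i,\,h\in S_i}\in\mathbb{R}^{2m-m'}$. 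This map is also linear in $(p_v)_v$, and for $(x_g)_g$ in the relative interior of a face its $2m-m'$ defining real functionals of $(p_v)_v$ are linearly independent --- those attached to distinct $i$ use disjoint coordinates of $(p_v)_v$, and within a fixed $i$ the exclusion $-h\notin S_i$ for $|h|>2$ keeps the character functionals independent. A routine transversality count then shows that the set of affine maps $f$ admitting such a configuration, being a finite union over the $(A_g)$ and over faces of projections of incidence sets of codimension $(2m-m')-\sum_g(|A_g|-1)>0$, has measure zero --- which is exactly the claimed failure for almost every affine $f$.

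I expect the main obstacle to be precisely this dimension bookkeeping: the number $2m-m'$ must simultaneously be the correct size of the first block of $W$, so that $\dim W+1$ equals the number $N+1$ of color classes and colored Carath\'eodory applies, and the correct codimension for the genericity argument. The two delicate points are that an order-two frequency in a real coordinate contributes one real equation rather than two (the source of the $-m'$), and that the conjugation symmetry $\overline{c_{i,h}}=c_{i,-h}$ for real-valued $f_i$ must be absorbed by the hypothesis $-h\notin S_i-T_i$ so that coefficients are neither double-booked nor omitted; a minor additional point is that some rainbow weights $\mu_v$ may vanish, which is harmless since it only shrinks the supports of the $x_g$.
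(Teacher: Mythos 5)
Your proof is correct, and it takes a genuinely (if closely) different route from the paper's. The paper proves Theorem~\ref{thm3.1} by first identifying $(\Delta_N)^{\ast G}_\Delta$ with the join $G^{\ast(N+1)}$, then building the real $N$-dimensional $G$-representation $W=\mathbb{C}^{m-m'}\oplus\mathbb{R}^{m'}\oplus\mathbb{R}^\perp[G]$ (which has no trivial subrepresentation), defining an affine $G$-equivariant map $A\colon G^{\ast(N+1)}\to W$ whose vanishing encodes the prescribed Fourier conditions together with the nonemptiness of the faces, and finally invoking Sarkaria's ``Linear Borsuk--Ulam'' theorem (Theorem~\ref{thm3.2}). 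You instead go directly to B\'ar\'any's colored Carath\'eodory theorem, which the paper notes underlies Theorem~\ref{thm3.2}: your color classes $C_v=\{y_{v,g}\}_{g\in G}$ are exactly the images $\{A(v_v^g)\}_{g\in G}$ of the $G$-orbits of vertices of $G^{\ast(N+1)}$, and your verification that $0\in\Conv(C_v)$ (centering of the $(r-1)$-simplex and orthogonality $\sum_g\chi_{-h}(g)=0$) is precisely the step that, in the paper, is packaged as ``$W$ contains no trivial subrepresentation, so the orbit averages of an equivariant map vanish.'' The dimension bookkeeping ($2m-m'$ for the Fourier block, $r-1$ for the barycentric block, with the $-m'$ coming from order-two frequencies in real coordinates and the $-h\notin S_i-T_i$ hypothesis avoiding double-counting) is identical, as is the optimality argument by counting independent linear conditions against the dimension of the configuration space. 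What your route buys is a more elementary, representation-theory-free presentation of the affine case; what the paper's Linear Borsuk--Ulam formulation buys is that it lines up structurally with the equivariant-cohomological proof of Proposition~\ref{prop5.1}, where $A$ becomes a continuous $G$-map and colored Carath\'eodory can no longer be applied.

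One small expository note: in your final remark you say vanishing rainbow weights $\mu_v$ ``only shrink the supports of the $x_g$,'' which could suggest some $x_g$ might become empty. Your own second-block computation already rules this out, since it forces $\sum_{v\in A_g}\mu_v=1/r>0$ for every $g$; it is worth stating that explicitly so the nonemptiness of each $A_g$ is visibly guaranteed.
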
 

.

	Our proof of Theorem \ref{thm3.1} relies on the join configurations commonly used in Tverberg--type problems. Recall that the $r$-fold join $\Delta_N^{\ast r}$ of $\Delta_N$ consists of all the formal convex sums $\lambda_1x_1+\cdots + \lambda_rx_r \in \sigma_1\ast \cdots \ast \sigma_r$ coming from the $r$--fold join of any $r$ faces of $\Delta_N$, including the possibility of empty faces. The deleted $r$-fold join \begin{equation}\label{eq:3.1} (\Delta_N)^{\ast r}_\Delta = \{\lambda_1x_1+\cdots + \lambda_rx_r\in \sigma_1\ast \cdots \ast \sigma_r \mid x_i\in \sigma_i\,\,\,\, \text{and}\,\,\, \sigma_i\cap \sigma_j=\emptyset\,\, \forall\,\, i\neq j\} \end{equation} is the subcomplex consisting of all formal convex sums from the joins of pairwise disjoint faces. Following Sarkaria [23], we parametrize each $r$-tuple of disjoint faces by $G$ and denote the resulting complex by $(\Delta_N)^{\ast G}_\Delta$. The group acts freely on $(\Delta_N)^{\ast G}_\Delta$ by right translations, so that $g'\cdot \sum_g\lambda_gx_g=\sum_g\lambda_{g-g'}x_{g-g'}$. On the other hand, one can parametrize $r$ disjoint copies of the vertex set $\{v_1,\ldots, v_{N+1}\}$ of $\Delta_N$ by $G$ as well, so that $v_j^g$ will denote the $g$-th copy of $v_j$. One then has an equivalent parametrization of all points from the join of pairwise disjoint faces by the $(N+1)$-fold join \begin{equation}\label{eq:3.2} G^{\ast (N+1)}=\left \{\sum_{j =1}^{N+1} t_j v_j ^{g_j} \mid g_j \in G\,\,\, \text{and}\,\,\, \sum_{j=1}^{N+1} t_j=1,\, t_j\geq0\right\} \end{equation} of the group itself. As with $(\Delta_N)^{\ast G}_\Delta$, $G$ also acts freely on $G^{\ast (N+1)}$, now by affine extension of the $G$--action on each copy of $G$ in $G^{\ast (N+1)}$ by addition: $g'\cdot \sum_{j=1}^{N+1} t_j v_j^{g_j} =\sum_{j=1}^{N+1} t_j v_j^{g'+g_j}$. Finally, there is an obvious isomorphism $\iota: (\Delta_N)^{\ast G}_\Delta \cong G^{\ast (N+1)}$ between these two simplicial complexes obtained by grouping. Explicitly, for each $v\in G^{\ast (N+1)}$ and each $g\in G$, consider the sets $J_g=\{j\mid g_j=g \,\, \text{and}\,\, t_j>0\}$ for each $g\in G$. One then defines $\iota(v)=\sum_{g\in G} \lambda_g x_g$, where \begin{equation} \label{eq:3.3} \lambda_g=\sum_{j\in J_g} t_j\,\, \text{and}\,\, x_g=\sum_{j\in J_g}\frac{t_j}{\lambda_g} v_j \end{equation} if $J_g\neq \emptyset$, and (2) $\lambda_g=0$ otherwise. Moreover, it is easily seen that $\iota$ respects the two $G$ actions.\\ 

	Theorem \ref{thm3.1} will follow from an application of Sarkaria's ``Linear Borsuk--Ulam" theorem [23, Theorem 2.4]:\\

\begin{theorem}\label{thm3.2} Let $W$ be a real $N$-dimensional linear representation of $G$ which does not contain the trivial subrepresentation. If $A: G^{\ast (N+1)} \rightarrow W$ is an affine linear $G$-equivariant map, then there exists some $v\in G^{\ast (N+1)}$ such that $A(v)=0$.\end{theorem}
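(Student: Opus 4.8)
The plan is to deduce Theorem~\ref{thm3.2} from B\'ar\'any's colored Carath\'eodory theorem~[2], as in Sarkaria's original argument~[23]. First I would use the hypotheses to put $A$ into a normal form. Since $A$ is affine linear on $G^{\ast(N+1)}$, it is determined by its values on the vertices $v_j^{g}$ ($1\le j\le N+1$, $g\in G$) via $A\bigl(\sum_{j=1}^{N+1} t_j v_j^{g_j}\bigr)=\sum_{j=1}^{N+1} t_j\,A(v_j^{g_j})$ (this is consistent across overlapping join faces because terms with $t_j=0$ drop out regardless of $g_j$). Applying $G$-equivariance to the free translation action $g'\cdot v_j^{g}=v_j^{g'+g}$ gives $A(v_j^{g})=g\cdot A(v_j^{0})$. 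Writing $w_j:=A(v_j^{0})\in W$, finding $v\in G^{\ast(N+1)}$ with $A(v)=0$ thus amounts to finding $g_1,\dots,g_{N+1}\in G$ and $t_1,\dots,t_{N+1}\ge 0$ with $\sum_j t_j=1$ and $\sum_{j=1}^{N+1} t_j\,(g_j\cdot w_j)=0$; equivalently, $0$ must lie in the convex hull of a ``rainbow'' choice, one point from each of the $N+1$ color classes $M_j:=\{\,g\cdot w_j : g\in G\,\}\subset W$.

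Next I would check the single hypothesis needed to invoke the colored Carath\'eodory theorem. Identify $W\cong\mathbb{R}^{N}$; then $M_1,\dots,M_{N+1}$ are exactly $N+1=(\dim W)+1$ colors, the right number for a rainbow Carath\'eodory statement in $\mathbb{R}^{N}$, and it suffices to verify $0\in\Conv(M_j)$ for each $j$. For this, consider the barycenter $b_j:=\tfrac{1}{|G|}\sum_{g\in G}g\cdot w_j\in\Conv(M_j)$. An elementary reindexing of the sum over $G$ shows $b_j$ is fixed by every element of $G$, so $b_j\in W^{G}$; since $W$ contains no trivial subrepresentation we have $W^{G}=\{0\}$, hence $b_j=0\in\Conv(M_j)$. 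B\'ar\'any's colored Carath\'eodory theorem then yields $x_j\in M_j$, say $x_j=g_j\cdot w_j$, and $t_j\ge 0$ with $\sum_j t_j=1$ and $\sum_{j=1}^{N+1} t_j x_j=0$.

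Finally, $v:=\sum_{j=1}^{N+1}t_j v_j^{g_j}$ lies in $G^{\ast(N+1)}$ by the description~(\ref{eq:3.2}), and $A(v)=\sum_{j}t_j(g_j\cdot w_j)=0$, completing the proof. I do not expect a serious obstacle here: the argument is short once colored Carath\'eodory is available, and the only essential use of a hypothesis is ``no trivial subrepresentation,'' which is precisely what forces each barycenter $b_j$ to vanish (and is sharp, since a constant map onto a nonzero $G$-fixed vector would otherwise violate the conclusion). The points that warrant care are the bookkeeping in the first paragraph --- that affine linearity plus equivariance really pin $A$ down to the vectors $w_j$ --- and the dictionary between points of the join $G^{\ast(N+1)}$ and colored point configurations in $W$; note also that this route, unlike a purely topological Borsuk--Ulam argument, applies to every finite $G$ uniformly (not only prime powers), which is exactly why the affine conclusion holds in that generality.
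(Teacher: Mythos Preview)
Your argument is correct and is precisely Sarkaria's original derivation via B\'ar\'any's colored Carath\'eodory theorem. Note that the paper does not itself prove Theorem~\ref{thm3.2}; it is quoted from [23, Theorem~2.4], and the introduction remarks that it ``is itself a corollary of B\'ar\'any's colored Carath\'eodory Theorem~[2]'' --- exactly the route you have written out.
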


\begin{proof}[Proof of Theorem \ref{thm3.1}] Let $f:\Delta_N\rightarrow \mathbb{C}^d\oplus \mathbb{R}^{d'}$ with $N=2m-m'+r-1$. To apply Theorem \ref{thm3.2}, we construct a linear $G$-equivariant map $A$ whose zeros correspond to the vanishing of the prescribed Fourier coefficients. To that end, let $S_i$, $T_i$, $m$, and $m'$ be as in the statement of Theorem \ref{thm3.1} and consider the direct sum \begin{equation}\label{eq:3.4} \sigma=\underset{\stackrel{1\leq i \leq d+d'}{h\in S_i}}{\oplus} \chi_{-h}. \end{equation} Each character $\chi_h$ is real--valued iff $h\in T_i$, so $\sigma: G\rightarrow \mathbb{C}^{m-m'}\oplus \mathbb{R}^{m'}$.

To rule out empty faces, we must also consider $\mathbb{R}^\perp[G]=\{(\lambda_g)_{g\in G} \mid \sum_{g\in G}\lambda_g=0, \lambda_g\in \mathbb{R}\}$, the orthogonal complement of the trivial subrepresentation of the regular representation $\mathbb{R}[G]$. The action here is again by right translation, so that each $g'\in G$ sends each $(\lambda_g)_{g \in G}$ to $(\lambda_{g-g'})_{g\in G}$. It follows that \begin{equation}\label{eq:3.5} W:=\mathbb{C}^{m-m'}\oplus\mathbb{R}^{m'}\oplus \mathbb{R}^\perp[G] \end{equation} is a real $N$-dimensional representation which does not contain the trivial subrepresentation.\\

	We define $A=\mathcal{A}\circ \iota: G^{\ast (N+1)}\rightarrow W$, where  \begin{equation}\label{eq:3.6} \mathcal{A}=\oplus_{i, h\in S_i} \mathcal{F}_{i,h} \oplus \mathcal{R}: (\Delta_N)^{\ast  G}_\Delta \rightarrow \mathbb{C}^{m-m'} \oplus \mathbb{R}^{m'} \oplus \mathbb{R}^\perp [G] \end{equation} is given by

			\begin{equation}\label{eq:3.7} \mathcal{F}_{i,h}(\sum_{g\in G} \lambda_gx_g) = \sum_{g\in G} \lambda_g f_i(x_g)\chi_h^{-1}(g)  \end{equation} for all $h\in S_i$ and all $1\leq i \leq d+d'$ and
					\begin{equation}\label{eq:3.8} \mathcal{R}(\sum_{g\in G} \lambda_gx_g) = (\lambda_g-\frac{1}{|G|})_{g\in G}. \end{equation} Observe that $\mathcal{F}_{i,h}$ is real--valued when $h\in T_i$ and $i>d$.\\ 
					
	As $\mathcal{R}(\lambda\, x)=0$ iff $\lambda_g=\frac{1}{|G|}$ for all $g\in G$, we see that the zeros of $A$ correspond to those $\{x_g\}_{g\in G}$ from non-empty pairwise disjoint faces of $\Delta_N$ for which the Fourier coefficients $c_{i,h}$ in (\ref{eq:2.2}) vanish for all $h\in S_i$ and all $1\leq i \leq d+d'$. As $\chi^{-1}_h=\chi_{-h}$, $\mathcal{A}$ is equivariant with respect to the actions  on $(\Delta_N)^{\ast G}$ and $W$, so that $A$ is equivariant with respect with respect to the actions on $G^{\ast (N+1)}$ and  $W$ as well.\\

	It remains to check that $A$ is affine. For any $i$ and any $h\in S_i$, we have $(\mathcal{F}_{i,h}\circ \iota)(v_j^g)=f_i(v_j)\chi_h^{-1}(g)$ for each $g\in G$. Let $v=\sum_{j=1}^{N+1}t_j v_j^{g_j}$ and let $\iota (v)=\sum_g \lambda_g x_g$ as above. For those $g\in G$ with $J_g\neq \emptyset$, we have $f_i(x_g)=\frac{1}{\lambda_g}\sum_{j\in J_g} t_jf_i(v_j)$ because $f$ is affine. It follows that $(\mathcal{F}_{i,h}\circ \iota)(v)=\sum_g \lambda_g f_i(x_g)\chi_h^{-1}(g)=\sum_g \sum_{j\in J_g} t_jf_i(v_j)\chi_h^{-1}(g)=\sum_g\sum_{j\in J_g} t_jf_i(v_j)\chi_h^{-1}(g_j)=\sum_{j=1}^{N+1}t_j(\mathcal{F}_{i,h}\circ \iota)(v_j^{g_j})$, as desired. On the other hand, $(\mathcal{R}\circ \iota)(v_j^g)=\mathbf{e}_g-\frac{1}{|G|}\mathbf{1}$, where $\mathbf{e}_g$ is the standard basis vector in $\mathbb{R}[G]$ and $\mathbf{1}=\sum_{g\in G}\mathbf{e}_g$. For given $v\in G^{\ast (N+1)}$ and each $g$ such that $J_g\neq\emptyset$, we have $\sum_{j\in J_g} t_j(\mathcal{R}\circ \iota)(v_j^g)=\lambda_g\mathbf{e}_g- \frac{\lambda_g}{|G|}\mathbf{1}$. Thus $\sum_{j=1}^{N+1}t_j(\mathcal{R}\circ \iota)(v_j)=\sum_{g\in G} \sum_{j\in J_g} t_j(\mathcal{R}\circ \iota)(v_j^g)=\sum_{g\in G} (\lambda_g\mathbf{e}_g- \frac{\lambda_g}{|G|}\mathbf{1})=(\mathcal{R}\circ \iota)(v)$.\\

	To prove that $N=2m-m'+r-1$ is tight, suppose that $n<N$ and let $f:\Delta_n\rightarrow \mathbb{C}^d\oplus \mathbb{R}^{d'}$. For $W$ as above, the vanishing of all desired coefficients corresponds to a zero of $A:G^{\ast (n+1)}\rightarrow W$. The argument is then along the lines of [23, Theorem 2.4] and [26, Theorem 1]. Since $A$ is affine, $A(v)=0$ means that $0\in \Conv(A(v_1^{g_1}),\ldots, A(v_{n+1}^{g_{n+1}}))$ for some $(g_1,\ldots, g_{n+1})\in G^{\oplus (n+1)}$, and by assumption each such convex hull has dimension at most $n<N$. On the other hand, requiring that $A(v_1^{g_1}),\ldots, A(v_n^{g_{n+1}})$  ``capture the origin" forces their convex hull to have dimension at least $N$, provided the images $f(v_\ell)$ of the vertices of $\Delta_n$ are generic. This can be seen by tallying the number of independent conditions for $A(v)=0$, $v=\sum_\ell t_\ell v_\ell$. First, one has $r-1$ independent linear conditions on the $t_\ell$ themselves because $\sum_{j\in J_g}t_j=\lambda_g=\frac{1}{|G|}$ for all $g\in G$. The vanishing of each Fourier coefficient yields $2m-m'$ additional linearly independent conditions. Namely, $c_{i,h}=0$ means that $\sum_{g\in G}\sum_{\ell\in J_g}t_\ell f_i(v_\ell)\chi^{-1}_{i,h}(g_\ell)=0$. As the $h$ are distinct for each $1\leq i \leq d+d'$, while for each $i>d$ one has in addition that none of the $c_{i,h}$ are conjugate, it follows that there are $m-m'$ complex linearly independent conditions for almost any choice of $f(v_\ell)$.  For generic $f(v_\ell)$, one likewise has an additional $m'$ real independent conditions when $i>d$ and $|h|=2$. \end{proof} 

\subsection{Fourier Generiticity} 

As stated in Theorem 3.1, the ability to annihilate coefficients depends on a particular coordinate decomposition of $f:\Delta_N\rightarrow \mathbb{R}^{2d+d'}$. For our proofs of Theorems \ref{thm1.2} and \ref{thm1.3}, however, we will need to consider arbitrary coordinate decompositions. To remedy this, we set the following notation for any decomposition of $\mathbb{R}^{2d+d'}=\oplus_{i=1}^d U_i \oplus_{j=1}^{d'}L_j$ by pairwise orthogonal coordinate planes $U_1,\ldots, U_d$ and coordinate lines $L_1,\ldots, L_{d'}$. Let 

\begin{equation}\label{eq:3.9} \sigma: \mathbb{R}^{2d+d'}\rightarrow \mathbb{C}^d\oplus \mathbb{R}^{d'} \end{equation}  

\noindent be the orthogonal transformation of $\mathbb{R}^{2d+d'}$ given by coordinate permutation which sends each $U_i$ to the $i$-th coordinate complex plane of $\mathbb{C}^d$ and each $L_j$ to the $j$-th coordinate line of $\mathbb{R}^{d'}$, and let 
 \begin{equation}\label{eq:3.10} f_\sigma=\sigma\circ f: \Delta_N\rightarrow \mathbb{C}^d\oplus \mathbb{R}^{d'}. \end{equation}
	
	It follows that the vanishing of the Fourier coefficients of the $f_\sigma$ for given $\{x_g\}_{g\in G}$ of $\Delta_N$ with pairwise disjoint support is equivalent to annihilation of those for the compositions of $f$ onto the $U_i$ and $L_j$.\\

Theorem \ref{thm3.1}, shows that for generic affine maps, one cannot annihilate ``too many" Fourier coefficients given the dimension of the simplex. We will now state this as a formal criteria which will be used repeatedly in what follows. For the proof of our van Kampen--Flores type theorems of Section 6, we will also need to make sure that ``extra" Fourier coefficients do not vanish when any $f_\sigma$ is restricted to any face of the simplex. Again, Theorem \ref{thm3.1} shows that such a restriction holds generically in the affine setting.

\begin{definition}\label{def1} [Fourier Generiticity] Let $f:\Delta_N\rightarrow \mathbb{R}^{2d+d'}$, where $d+d'\geq 1$ and $d,d'\geq 0$. Let $G=\mathbb{Z}_{r_1}\oplus \cdots\oplus \mathbb{Z}_{r_k}$ and $r=r_1\cdots r_k$. We say that $f$ is $G$--Fourier generic if  \begin{itemize}
 \item for any coordinate decomposition $\mathbb{R}^{2d+d'}=\oplus_{i=1}^d U_i \oplus_{j=1}^{d'}L_j$ and $\sigma$ as in (\ref{eq:3.9}) and \item for any choice of $S_i$ and $T_i$ as in the statement of Theorem \ref{thm3.1},\end{itemize} 
we have the following condition on any face $\Delta_n$ of $\Delta_N$ with
 $n<2m-m'+r-1$: for the restriction $f_\sigma|\Delta_n:\Delta_n\rightarrow \mathbb{C}^d\oplus \mathbb{R}^{d'}$ of $f_\sigma$ to $\Delta_n$, there does not exist any collection $\{x_g\}_{g\in G}\subset \Delta_n$ of points with pairwise disjoint support for which the Fourier coefficients $c_{i,h}$ \ref{eq:2.3} for $f_\sigma|\Delta_n$ vanish for all $h\in S_i$ and all $1\leq i \leq d+d'$. Finally, we say that $f$ is Fourier generic if it is $G$--Fourier generic for any abelian group $G$. 
\end{definition} 

In particular, note that if $f:\Delta_{N-1}\rightarrow \mathbb{R}^{2d+d'}$ is Fourier generic and $N<T(r,2d+d')$, then $f$ does not admit a Tverberg $r$-partition. 

\begin{remark}\label{rem1} Theorem \ref{thm3.1} shows that almost every affine map is Fourier generic. In the topological setting, the generic designation remains appropriate since again the vanishing of prescribed coefficients is equivalent to a zero of the continuous (and $G$-equivariant) map $A: G^{\ast (n+1) }\rightarrow \mathbb{R}^N$. As $G^{\ast (n+1)}$ is $n$-dimensional, the number of independent conditions again exceeds the degrees of freedom when $n<N$ and so will not vanish for typical $f$. \end{remark}

\section{Proofs in the Affine Setting}  

Given Lemma \ref{lem2.1} and Theorem \ref{thm3.1}, Theorem \ref{thm1.2} follows easily. 

\begin{proof} [Proof of Theorem \ref{thm1.2}.]  For both (a) and (b), suppose that $f: \Delta_{N-1}\rightarrow \mathbb{R}^d$ is Fourier generic and affine, with $d=2k$ or $d=2k+1$ respectively. 

	For (a), let $r=r_1\cdots r_k$, $r_i\geq 3$ and let $G=\oplus_{i=1}^k \mathbb{Z}_{r_i}$.  Given $k$ pairwise orthogonal planes $U_1,\ldots, U_k$, let $\sigma: \oplus_{i=1}^kU_i \rightarrow \mathbb{C}^k$ be the resulting orthogonal transformation (\ref{eq:3.9}) and consider the corresponding  $f_\sigma: \Delta_{N-1}\rightarrow\mathbb{C}^k$ (\ref{eq:3.10}). For $\{x_g\}_{g\in G}$  with pairwise disjoint support, $\{f(x_g)\}_{g\in G}$ will be the vertex set of a $P(r,2k)$ with each $P_{r_i}$ parallel to $U_i$ iff $\{f_\sigma(x_g)\}_{g\in G}$ is the vertex set of a $P(r,2k)$ with each $P_{r_i}$ parallel to $i$-th coordinate plane. Following Lemma \ref{lem2.1}(a), we seek some $\{x_g\}_{g\in G}$ so that the $r-2$ coefficients $c_{i,h}$ given by $h\in G-\{0, \mathbf{e}_i\}$ vanish for each $1\leq i \leq k$ in the Fourier decompositions (\ref{eq:2.2}) of $f_\sigma$. This is guaranteed by Theorem \ref{thm3.1} with $N=2k(r-2)+r=T(r,2k)-2k$, and for this $N$ no additional coefficients can vanish because $f$ is assumed Fourier generic. On the other hand, it is immediate from the optimality of Theorem \ref{thm3.1} and the Fourier characterization of the $P(r,2k)$--partitions from Lemma \ref{lem2.1}(a) that this $N$ is optimal. In particular, $N_{P_r}=T(r,2)-2$ for all $r\geq 3$.\\

	The proof of (b) is nearly identical, except in dealing with the final coordinate of the analogous $f_\sigma: \Delta_{N-1}\rightarrow \mathbb{C}^k \oplus \mathbb{R}$. Considering $G'=G\oplus\mathbb{Z}_2$, we seek $c_{i,h}=0$ for all $h\in G'-\{0, \mathbf{e}_i\}$, $1\leq i \leq k+1$. As $(f_\sigma)_{k+1}$ is real--valued and none of the $h_{k+1,j}$ considered have order 2, we only need to annihilate half of these $2r-2$ Fourier coefficients. Thus all of our desired coefficients vanish provided $N=2(k(2r-2)+\frac{2r-2}{2})+2r=T(2r,2k+1)-(2k+1)$. Generiticity of $f$ again ensures that $c_{i,h_{i,\mathbf{e}_i}}\neq 0$ for all $1\leq i \leq k+1$, 
and the optimality of $N$ is again guaranteed by Theorem \ref{thm3.1} and Lemma \ref{lem2.1}(b).
\end{proof}

As a further application of Theorem \ref{thm3.1}, we give two extensions of  Theorem \ref{thm1.1} to higher dimensions. 

\begin{theorem}\label{thm4.1}$\newline$ 
(a) Let $S$ be a set of $T(r,d)-2$ generic points in $\mathbb{R}^d$. Then for any plane $U$ in $\mathbb{R}^d$, there exists a $P_r$--partition of $S$ with $P_r$ parallel to $U$.\\
(b) Let $d$ be even. Then almost any $T(r,d)-d$ points in $\mathbb{R}^d$ can be $P_r$--partitioned so that $P_r$ lies in a complex 1-flat. In particular, \begin{equation} \label{eq:4.1} N_{(P_r;d)}\leq (r-2)(d+1)+2.\end{equation}
\end{theorem}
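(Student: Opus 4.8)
\emph{Proof strategy.} I would deduce both parts from Theorem \ref{thm3.1} by choosing suitable coordinates and target coefficients, and then observe that the only way the construction can degenerate is to produce a Tverberg $r$-partition, which a generic set of fewer than $T(r,d)$ points does not admit. Throughout, view the $N$ points as $f(v_1),\dots,f(v_N)$ for an affine Fourier generic map $f:\Delta_{N-1}\to\mathbb{R}^d$ and take $G=\mathbb{Z}_r$ with generator $1$, so that $\chi_1(g)=\zeta_r^g$ and $1\in\mathbb{Z}_r^\times$. Since Theorem \ref{thm3.1} carries no genericity hypothesis, and since the existence of a Tverberg $r$-partition is coordinate-free, the genericity of $S$ will be used only to rule out the degenerate case; in particular I do not need Fourier generiticity with respect to rotated coordinates.

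For (a), fix a plane $U\subseteq\mathbb{R}^d$ and a linear isometry identifying $\mathbb{R}^d$ with $\mathbb{C}\oplus\mathbb{R}^{d-2}$ that carries $U$ onto the complex factor; write $f=(f_1,f_2,\dots,f_{d-1})$ with $f_1$ complex-valued and $f_2,\dots,f_{d-1}$ real-valued. By the Fourier reasoning behind Lemma \ref{lem2.1}(a) (applied to $f_1$) together with the observation after (\ref{eq:2.2}) that a real function $F_j$ is constant iff all its nonzero Fourier coefficients vanish, $\{f(x_g)\}_{g\in\mathbb{Z}_r}$ is the vertex set of a regular $r$-gon parallel to $U$ exactly when the $r-2$ coefficients $c_{1,h}$ with $h\in\mathbb{Z}_r\setminus\{0,1\}$ vanish and $c_{j,h}=0$ for all $h\neq 0$ and all $2\le j\le d-1$. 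Using $\overline{c_{j,h}}=c_{j,-h}$, I would replace the last requirement by annihilating a half-system $S_j\subseteq\mathbb{Z}_r\setminus\{0\}$ as in Theorem \ref{thm3.1}, for which $2|S_j|-m_j'=r-1$ regardless of the parity of $r$. Summing the contributions gives $2m-m'+r-1=2(r-2)+(d-2)(r-1)+r-1=T(r,d)-3$, so Theorem \ref{thm3.1} applied to our $T(r,d)-2$ points (the vertices of $\Delta_{T(r,d)-3}$) yields $\{x_g\}$ of pairwise disjoint support realizing all these vanishings; then $f_1(x_g)=c_{1,0}+c_{1,1}\zeta_r^g$ and each $f_j(x_g)$ is independent of $g$. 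If $c_{1,1}=0$ all the $f(x_g)$ coincide, which exhibits a Tverberg $r$-partition of a generic set of $T(r,d)-2<T(r,d)$ points, impossible. Hence $c_{1,1}\neq 0$, so $\{f(x_g)\}$ is a genuine regular $r$-gon parallel to $U$, giving the desired partition.

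For (b), with $d$ even identify $\mathbb{R}^d=\mathbb{C}^{d/2}$ via the standard complex structure and write $f=(f_1,\dots,f_{d/2})$ with each $f_j$ complex-valued. Since $\lambda\mapsto v+\lambda w$ is a similarity of $\mathbb{C}$ onto $v+\mathbb{C}w$ when $w\neq 0$, a regular $r$-gon contained in a complex $1$-flat is precisely a set $\{v+\zeta_r^g w\}_{g\in\mathbb{Z}_r}$ with $w\neq 0$; in coordinates this reads $f_j(x_g)=c_{j,0}+c_{j,1}\zeta_r^g$ for all $j$, i.e. $c_{j,h}=0$ for every $h\in\mathbb{Z}_r\setminus\{0,1\}$ and every $1\le j\le d/2$. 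Here $m=\frac{d}{2}(r-2)$ and $m'=0$, so $2m-m'+r-1=d(r-2)+r-1=T(r,d)-d-1$, and Theorem \ref{thm3.1} applied to our $T(r,d)-d$ points gives $\{x_g\}$ with $f(x_g)=v+\zeta_r^g w$ for $v=(c_{j,0})_j$, $w=(c_{j,1})_j$. If $w=0$ the $f(x_g)$ all coincide, again a forbidden Tverberg $r$-partition, so $w\neq 0$ and $\{f(x_g)\}$ is a regular $r$-gon in the complex $1$-flat $v+\mathbb{C}w$. The bound $N_{(P_r;d)}\le T(r,d)-d=(r-2)(d+1)+2$ then follows at once, since forgetting the complex-flat condition only weakens the conclusion and $P_r$-partitionability is monotone in the number of points.

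The main obstacle I anticipate is the bookkeeping that makes the two dimension counts land exactly on $T(r,d)-3$ and $T(r,d)-d-1$: in (a) this needs care with the parity of $r$ in the half-systems $S_j$ (the clean identity $2|S_j|-m_j'=r-1$ holding in both cases), and in both parts one must correctly isolate the single coefficient $c_{1,1}$, resp.\ the vector $(c_{j,1})_j$, that must be certified nonzero. The conceptual point worth getting right is that this nonvanishing needs nothing beyond the hypothesis that a generic sub-threshold point set has no Tverberg partition, since the degenerate alternative is literally such a partition; this sidesteps any appeal to the sharper ``no coincidental extra vanishing'' content of Fourier generiticity.
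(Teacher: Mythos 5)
Your proposal is correct and follows essentially the same route as the paper: identify $\mathbb{R}^d$ with $\mathbb{C}\oplus\mathbb{R}^{d-2}$ (resp.\ $\mathbb{C}^{d/2}$) adapted to $U$ (resp.\ the complex structure), annihilate precisely the $r-2$ nontrivial coefficients of the first coordinate save $c_{1,1}$ together with all nontrivial coefficients of the remaining coordinates via Theorem~\ref{thm3.1}, and rule out the degenerate alternative $c_{1,1}=0$ (resp.\ $\mathbf{c}_1=0$) by noting it would be a Tverberg $r$-partition below the Tverberg number. Your explicit verification of the half-system identity $2m_j-m_j'=r-1$ and the resulting counts $T(r,d)-3$ and $T(r,d)-d-1$ correctly fill in arithmetic the paper leaves implicit, and your observation that only the ``no sub-threshold Tverberg partition'' consequence of genericity is needed (rather than the full Fourier generiticity machinery) is a legitimate, if minor, sharpening of the hypotheses actually used.
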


\begin{proof}  Again assume that $f:\Delta_{N-1}\rightarrow\mathbb{R}^d$ is affine and Fourier generic. Given a linear 2--flat $U$, let $f_\Phi=\Phi \circ f:\Delta_{N-1}\rightarrow \mathbb{C}\oplus \mathbb{R}^{d-2}$, where $\Phi$ is some orthogonal transformation sending $U$ to the first complex coordinate plane. Considering the resulting Fourier decompositions for $f_\Phi$ with $G=\mathbb{Z}_r$ and letting $N=T(r,d)-2$, Theorem \ref{thm3.1} guarantees some $\{x_g\}_{g\in \mathbb{Z}_r}$ so that $c_{1,g}=0$ for all $g\in \mathbb{Z}_r-\{0,1\}$, and moreover that $c_{i,g}=0$ for each $g\neq0$ for all $i\geq 2$. If $c_{1,1}=0$ also, then $\{x_g\}_{g\in\mathbb{Z}_r}$ would be a Tverberg $r$-partition for $f_\Phi$ and hence for $f$ as well. This is impossible, however, because $N<T(r,d)$ and $f$ is Fourier generic. Thus the $f_\Phi(x_g)$ are the vertices of a regular $r$-gon parallel to the first coordinate plane, and equivalently the $f(x_g)$ are the vertices of a regular $r$--gon parallel to $U$. 
	
	For (b), let $N=T(r,2d)-2d$ and consider $f:\Delta_{N-1}\rightarrow\mathbb{C}^d$. Given $\{x_g\}_{g\in \mathbb{Z}_r}$, annihilating $c_{i,g}$ for all $g\in \mathbb{Z}_r-\{0,1\}$ and all $1\leq i\leq d$  gives $f(x_g)=\mathbf{c}_0+\zeta_r^g\mathbf{c}_1$ for each $g\in \mathbb{Z}_r$, where $\mathbf{c}_j=(c_{1,j},\ldots, c_{d,j})\in \mathbb{C}^d$ for $j=0,1$. \end{proof} 

 That the $N$ of Theorem \ref{thm4.1} (a) (respectively, (b)) is tight again follows from Theorem \ref{thm3.1} and the partition's equivalent Fourier characterization. As with Theorem \ref{thm1.2}, however, the upper bound (\ref{eq:4.1}) is not tight, and indeed subtraction of the remaining $2(d-2)=\dim G_2(\mathbb{R}^d)$ degrees of freedom from the $N$ of part (a) yields the expected value $N_{(P_r;d)}=(r-3)(d+1)+5$ for all $d\geq 2$. This conjectured value is confirmed for all $r=3$ in Section 7 (Proposition \ref{prop7.1}).  
 
  \section{Proofs in the Continuous Setting}

For general abelian groups, care must be taken to ensure vanishing of the desired Fourier coefficients when $f$ is continuous (see, e.g., [25, Theorem 3.2]).  As with the Topological Tverberg theorem, however, standard equivariant cohomological techniques ensure that coefficients can be annihilated as freely as with affine maps if $G=\mathbb{Z}_p^{\oplus k}$ and $p$ is prime.

\begin{proposition}\label{prop5.1}$\newline$
(a) Let $f:\Delta_N\rightarrow \mathbb{C}^d$ and let $G=\mathbb{Z}_p^{\oplus k}$, $p$ an odd prime. For each $1\leq i \leq d$, let $S_i\subseteq G-\{0\}$. Let $m_i=|S_i|$ and let $m=m_1+\cdots +m_d$. If $N=2m+p^k-1$, then there exists some $\{x_g\}_{g\in G}\subset \Delta_N$ from pairwise disjoint faces such that $c_{i,h}=0$ for all  $h\in S_i$ and all $1\leq i \leq d$ in the Fourier expansions (\ref{eq:2.2}).\\
(b)  Let $f:\Delta_N\rightarrow \mathbb{R}^d$ and let $G=\mathbb{Z}_2^{\oplus k}$. For each $1\leq i \leq d$, let $S_i\subseteq  G-\{0\}$. Let $m_i=|S_i|$ and $m=m_1+\cdots +m_d$. If $N=m+2^k-1$, then there exists some $\{x_g\}_{g\in G}\subset \Delta_N$ from pairwise disjoint faces such that $c_{i,h}=0$ for all  $h\in S_i$ and all $1\leq i \leq d$ in the Fourier expansions (\ref{eq:2.2}).
\end{proposition}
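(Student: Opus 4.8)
The plan is to rerun the proof of Theorem \ref{thm3.1} essentially verbatim in the topological category, replacing Sarkaria's affine Linear Borsuk--Ulam Theorem \ref{thm3.2} by the classical equivariant Borsuk--Ulam obstruction available for elementary abelian $p$-groups. First I would observe that the $G$-equivariant map $A=\mathcal{A}\circ\iota\colon G^{\ast(N+1)}\to W$ assembled in the proof of Theorem \ref{thm3.1} from the maps $\mathcal{F}_{i,h}$ of (\ref{eq:3.7}) and the ``empty face'' term $\mathcal{R}$ of (\ref{eq:3.8}) is given by exactly the same formulas when $f$ is merely continuous; affineness of $f$ entered there only to conclude that $A$ is affine, a property we now simply discard. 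In case (a), $p$ odd forces every $h\in S_i$ to have order $p$, so $W=\mathbb{C}^{m}\oplus\mathbb{R}^{\perp}[G]$ with $\dim_{\mathbb{R}}W=2m+p^{k}-1=N$; in case (b), every nonzero element of $G=\mathbb{Z}_2^{\oplus k}$ has order $2$, so $W=\mathbb{R}^{m}\oplus\mathbb{R}^{\perp}[G]$ with $\dim_{\mathbb{R}}W=m+2^{k}-1=N$. In both cases $W$ contains no trivial subrepresentation, and exactly as in Theorem \ref{thm3.1} a zero of $A$ is precisely a family $\{x_g\}_{g\in G}$ from nonempty pairwise disjoint faces with $c_{i,h}=0$ for all $h\in S_i$ and all $i$. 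So it suffices to show that $A$ vanishes somewhere.

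Suppose not. Then $v\mapsto A(v)/\|A(v)\|$ is a $G$-map $G^{\ast(N+1)}\to S(W)\cong S^{N-1}$. Now $G^{\ast(N+1)}$ is the $(N+1)$-fold join of the discrete free $G$-set $G$ (with $|G|=p^{k}\ge 2$), hence a free $G$-complex of dimension $N$ that is $(N-1)$-connected. Consequently the Borel fibration $G^{\ast(N+1)}/G\to BG$ with fiber $G^{\ast(N+1)}$ has $(N-1)$-connected fiber, so its Serre spectral sequence with $\mathbb{F}_p$-coefficients shows that $H^{j}(BG;\mathbb{F}_p)\to H^{j}_G(G^{\ast(N+1)};\mathbb{F}_p)$ is injective for all $j\le N$; equivalently the Fadell--Husseini index $\mathrm{Index}_G(G^{\ast(N+1)};\mathbb{F}_p)$ vanishes in degrees $\le N$. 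On the other hand, the Gysin sequence of the sphere bundle $EG\times_G S(W)\to BG$ shows that the $\mathbb{F}_p$-equivariant Euler class $e(W)\in H^{N}(BG;\mathbb{F}_p)$ lies in $\mathrm{Index}_G(S(W);\mathbb{F}_p)$. By monotonicity of the index along the $G$-map just produced, $e(W)$ would then lie in $\mathrm{Index}_G(G^{\ast(N+1)};\mathbb{F}_p)$ in degree $N$, forcing $e(W)=0$.

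The whole argument therefore reduces to the nonvanishing of $e(W)$ in $H^{N}(B(\mathbb{Z}_p^{\oplus k});\mathbb{F}_p)$ --- and this is exactly the step where the elementary abelian hypothesis is indispensable, just as in the Topological Tverberg theorem, the analogous statement genuinely failing for non-prime-power groups. Writing $H^{*}(B\mathbb{Z}_p^{\oplus k};\mathbb{F}_p)=\Lambda[s_1,\dots,s_k]\otimes\mathbb{F}_p[t_1,\dots,t_k]$ for $p$ odd, respectively $\mathbb{F}_2[t_1,\dots,t_k]$ for $p=2$, with $\deg t_i$ equal to $2$ (resp.\ $1$), each nontrivial real irreducible occurring in $W$ --- the $2$-plane bundle of a character $\chi\ne 0$ for $p$ odd, or the sign line bundle of a $\chi\ne 0$ for $p=2$ --- has Euler class a nonzero element of the polynomial subring $\mathbb{F}_p[t_1,\dots,t_k]$ (namely the Bockstein $\beta(\chi)$, resp.\ $\chi$ itself). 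Since $W$ is a direct sum of such irreducibles (the $\chi_{-h}$ with $h\in S_i$, together with the nontrivial summands of the regular representation making up $\mathbb{R}^{\perp}[G]$) and $\mathbb{F}_p[t_1,\dots,t_k]$ is an integral domain, $e(W)=\prod e(W_\chi)\ne 0$, of degree $N$. This contradiction shows $A$ has a zero and proves both parts. The only genuinely delicate point is this final Euler-class computation; everything else is a transcription of the proof of Theorem \ref{thm3.1} together with standard facts about connectivity of joins and the Fadell--Husseini index.
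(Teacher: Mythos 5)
Your proof is correct, but it takes a genuinely different route from the paper's. The paper abandons the join configuration space used in Theorem~\ref{thm3.1} and passes instead to the deleted \emph{product} $(\Delta_N)^{\times G}_\Delta$, which by construction only records points from nonempty pairwise disjoint faces, so the ``empty face'' correction $\mathcal{R}$ disappears and the relevant equivariant map lands in $\mathbb{C}^m$ (resp.\ $\mathbb{R}^m$) rather than in $W=\mathbb{C}^m\oplus\mathbb{R}^\perp[G]$. The paper then invokes the cited connectivity of $(\Delta_N)^{\times G}_\Delta$ (it is $(N-|G|)$--connected and $(N-|G|+1)$--dimensional) together with Volovikov's cohomological Borsuk--Ulam lemma (Lemma~\ref{lem5.2}) to rule out a nonvanishing $G$--map to the sphere $S(\mathbb{C}^m)$. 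You instead keep the join $G^{\ast(N+1)}$ and the full representation $W$ (with the $\mathcal{R}$ summand) exactly as in Theorem~\ref{thm3.1}, and replace Sarkaria's linear theorem by a Fadell--Husseini index argument: the $(N-1)$--connectivity of $G^{\ast(N+1)}$ forces its index to vanish in degrees $\le N$, while the mod~$p$ Euler class $e(W)$ is a nonzero product of nontrivial characters (or their Bocksteins) in the polynomial part of $H^*(B\mathbb{Z}_p^{\oplus k};\mathbb{F}_p)$, yielding a contradiction. Both routes are sound and of essentially equal strength; yours is more self-contained (the Euler class nonvanishing is proved rather than cited) and keeps a single configuration space throughout the paper, while the paper's is shorter on the page because it reuses Volovikov's lemma as a black box and sidesteps the $\mathcal{R}$ bookkeeping by switching to the deleted product.
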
 

Although Proposition \ref{prop5.1} can be proven as a consequence of a slightly more general version of [25, Theorem 3.2], we shall derive it instead from a lemma of Volovikov [29, Lemma 8]. 

\begin{lemma}\label{lem5.2} Let $G=\mathbb{Z}_p^{\oplus k}$, $p$ prime, and let $X$ and $Y$ be fixed point free $G$--spaces. If $Y$ is a finite--dimensional cohomology $n$--sphere over the field $\mathbb{F}_p$ and $\tilde{H}^i(X; \mathbb{F}_p)=0$ for all $1\leq i\leq n$, then there is no continuous $G$--equivariant map $h:X\rightarrow Y$.
\end{lemma}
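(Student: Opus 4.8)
The plan is to run the standard Borel construction argument (the Fadell--Husseini index method), reducing the statement to a comparison of two ideals in $H^*(BG;\mathbb{F}_p)$. Form the Borel constructions $X_G=EG\times_G X$ and $Y_G=EG\times_G Y$, with their bundle projections to $BG$ inducing ring maps $\rho_X\colon H^*(BG;\mathbb{F}_p)\to H^*(X_G;\mathbb{F}_p)$ and $\rho_Y\colon H^*(BG;\mathbb{F}_p)\to H^*(Y_G;\mathbb{F}_p)$. A continuous $G$-equivariant map $h\colon X\to Y$ induces a map $h_G\colon X_G\to Y_G$ over $BG$, whence $\rho_X=h_G^*\circ\rho_Y$ and therefore $\mathrm{Ind}_G(Y):=\ker\rho_Y\subseteq\ker\rho_X=:\mathrm{Ind}_G(X)$. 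So it suffices to produce a class $\tau\in H^{n+1}(BG;\mathbb{F}_p)$ with $\tau\in\mathrm{Ind}_G(Y)$ but $\tau\notin\mathrm{Ind}_G(X)$. (We may assume $n\geq1$; the case $n=0$ is degenerate.)

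For the $Y$-side, run the Serre spectral sequence of the fibration $Y\hookrightarrow Y_G\to BG$ with $\mathbb{F}_p$-coefficients. Since $Y$ is an $\mathbb{F}_p$-cohomology $n$-sphere and $G$ is a $p$-group, it acts trivially on $H^n(Y;\mathbb{F}_p)\cong\mathbb{F}_p$ (as $|\mathrm{Aut}(\mathbb{F}_p)|=p-1$ is prime to $p$), so the coefficient system is trivial and $E_2^{s,t}=H^s(BG;\mathbb{F}_p)$ for $t\in\{0,n\}$ and vanishes otherwise. A count of rows shows the only possibly nonzero differential is the transgression $d_{n+1}$, which by the module structure of the spectral sequence is multiplication by the class $\tau:=d_{n+1}(u)\in H^{n+1}(BG;\mathbb{F}_p)$, $u$ a fundamental class; reading off the edge homomorphism then shows $\tau\in\ker\rho_Y$. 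The crux is that $\tau\neq0$. If $\tau=0$ the spectral sequence collapses, $\rho_Y$ is injective, and $H^*(Y_G;\mathbb{F}_p)$ contains $H^*(BG;\mathbb{F}_p)$ as a submodule; but since $G\cong\mathbb{Z}_p^{\oplus k}$ is an elementary abelian $p$-group acting without fixed points on the finite-dimensional space $Y$, the Borel--Quillen localization theorem forces $H^*(Y_G;\mathbb{F}_p)$ to be a torsion $H^*(BG;\mathbb{F}_p)$-module (it vanishes after inverting the relevant homogeneous classes, since $Y^G=\varnothing$), which is impossible. Hence $\tau\neq0$ and $\tau\in\mathrm{Ind}_G(Y)$. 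I expect this step to be the main obstacle, in that it is the one point requiring nontrivial external input (the localization theorem) and the one place the hypotheses ``$G$ elementary abelian'' and ``$Y$ fixed-point free'' are genuinely used.

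For the $X$-side, run the Serre spectral sequence of $X\hookrightarrow X_G\to BG$. The hypothesis $\widetilde H^i(X;\mathbb{F}_p)=0$ for $1\leq i\leq n$ kills rows $1,\dots,n$ of $E_2$, while $E_2^{s,0}=H^s(BG;\mathbb{F}_p)$. Any differential into the bottom row $E_r^{*,0}$ originates in row $r-1$, which is nonzero only for $r-1=0$ or $r-1\geq n+1$; hence $E_\infty^{s,0}=E_2^{s,0}$ for all $s\leq n+1$, so $\rho_X$ is injective in degrees $\leq n+1$. In particular $\tau\neq0$ has nonzero image in $H^{n+1}(X_G;\mathbb{F}_p)$, so $\tau\notin\mathrm{Ind}_G(X)$. (This half uses neither fixed-point-freeness of $X$ nor anything about $\widetilde H^i(X;\mathbb{F}_p)$ for $i>n$.)

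Combining the two sides: the existence of a $G$-equivariant map $h\colon X\to Y$ would give $\tau\in\mathrm{Ind}_G(Y)\subseteq\mathrm{Ind}_G(X)$, contradicting $\tau\notin\mathrm{Ind}_G(X)$. Therefore no such $h$ exists.
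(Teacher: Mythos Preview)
The paper does not prove Lemma~\ref{lem5.2}; it is quoted verbatim from Volovikov's paper and used as a black box in the proof of Proposition~\ref{prop5.1}. So there is nothing in the paper to compare your argument against.

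That said, your argument is the standard Fadell--Husseini index proof and is correct. One small point worth flagging: when you write $E_2^{s,0}=H^s(BG;\mathbb{F}_p)$ on the $X$-side you are tacitly assuming $X$ is path-connected, whereas the lemma as stated in the paper only requires $\tilde H^i(X;\mathbb{F}_p)=0$ for $1\le i\le n$. In fact the lemma is false without connectedness (take $G=\mathbb{Z}_p$, $Y=S^1$ with the free rotation action, and $X=G$ with the regular action), so this is a slight imprecision in the paper's transcription of Volovikov's statement rather than a defect in your proof; in the paper's applications $X=(\Delta_N)^{\times G}_\Delta$ is always highly connected, so no harm is done. Your invocation of the Borel--Quillen localization theorem to force $\tau\neq 0$ is exactly the right external input and is where the hypotheses that $G$ is elementary abelian and $Y$ is fixed-point free and finite-dimensional are genuinely consumed.
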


Using Proposition \ref{prop5.1}, one can prove all prime power cases of Theorem \ref{thm1.3}(a) using the same argument as for Theorem \ref{thm1.2}.  Namely, for $f: \Delta_N\rightarrow \mathbb{R}^{2d}$ one annihilates all relevant Fourier coefficients of the map $f_\sigma: \Delta_N\rightarrow \mathbb{C}^k$, and as there Fourier generiticity is enough to guarantee that no additional coefficients vanish. The argument for Theorem 1.3(b) is entirely analogous: 

\begin{proof}[Proof of Theorem 1.3(b)] Given $f:\Delta_{N-1}\rightarrow \mathbb{R}^k$ and $N=T(2^k,k)-k$, there exists some $\{x_g\}_{g\in \mathbb{Z}_2^{\oplus k}}$ with pairwise disjoint support for which $c_{i,g}=0$ all $g\in \mathbb{Z}_2^{\oplus k} -\{0, \mathbf{e}_i\}$ for each $1\leq i \leq k$. On the other hand, no other Fourier coefficients vanish by generiticity. Thus each $\{f_i(x_g)\}_{g\in \mathbb{Z}_2^{\oplus k}}$ is the set of endpoints of a segment in $\mathbb{R}$, hence the $f(x_g)$ are the vertex set of a $k$-orthotope with edges parallel to the coordinate axes. \end{proof}

Proposition \ref{prop5.1} also gives immediate topological extensions of Theorem \ref{thm4.1} for odd primes, again with the same proofs as in the affine setting. 

\begin{theorem}\label{thm5.2} Let $f:\Delta_{N-1}\rightarrow \mathbb{R}^d$  be a Fourier generic continuous map and let $r$ be an odd prime.\\
(a) Let $N=T(r,d)-2$. Then for any plane $U$ in $\mathbb{R}^d$, there exists points $x_1,\ldots, x_r\in \Delta_{N-1}$ with pairwise disjoint support such that $f(x_1),\ldots, f(x_r)$ are the vertices of a regular $r$-gon parallel to $U$.\\
(b) If $d$ is even and $N=T(r,d)-d$, then there exist points $x_1,\ldots, x_r\in \Delta_{N-1}$ with pairwise disjoint support such that $f(x_1),\ldots, f(x_r)$ are the vertices of a regular $r$-gon which lie in a complex 1--flat.
\end{theorem}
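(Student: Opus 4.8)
The plan is to transcribe the proof of Theorem \ref{thm4.1} essentially word for word, substituting Proposition \ref{prop5.1} for Theorem \ref{thm3.1} as the device that annihilates Fourier coefficients. Since $r$ is an odd prime we take $G=\mathbb{Z}_r$, and since $G$ has no elements of order two the relevant instance of Proposition \ref{prop5.1} reads: for $N=2m+r-1$ one can make $m$ prescribed Fourier coefficients vanish. (For part (a) with $d$ odd one needs the evident extension of Proposition \ref{prop5.1}(a) to a target of the form $\mathbb{C}^{e}\oplus\mathbb{R}$; this holds with the same bound $N=2m+r-1$ — where a real coordinate contributes only the $(r-1)/2$ representatives of the conjugate pairs $\{g,-g\}$ to the count $m$, exactly as in Theorem \ref{thm3.1} — by the identical construction of a fixed-point-free equivariant map $G^{\ast(N+1)}\to W$ from the proof of Theorem \ref{thm3.1}, now with Lemma \ref{lem5.2} in place of Theorem \ref{thm3.2}.) Beyond invoking this, the only things to verify are that the requisite $m$ is exactly the dimension budget $N-1$ allows, and that Fourier genericity upgrades ``all but two coefficients vanish'' to a genuine regular $r$-gon.

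For part (a), fix a plane $U\subseteq\mathbb{R}^d$, let $\Phi$ be an orthogonal transformation carrying $U$ to the first coordinate complex plane, and set $f_\Phi=\Phi\circ f:\Delta_{N-1}\to\mathbb{C}\oplus\mathbb{R}^{d-2}$. Using the Fourier expansions \eqref{eq:2.2} of $f_\Phi$ with $G=\mathbb{Z}_r$, we seek $\{x_g\}_{g\in\mathbb{Z}_r}$ of pairwise disjoint support with $c_{1,g}=0$ for all $g\in\mathbb{Z}_r-\{0,1\}$ (that is $r-2$ coefficients) and with $c_{i,g}=0$ for all $g\neq 0$ in each of the $d-2$ real coordinates (for which, by $\overline{c_{i,g}}=c_{i,-g}$, it suffices to kill the $(r-1)/2$ coefficients indexed by $\{1,\dots,(r-1)/2\}$). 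The total is $m=(r-2)+(d-2)(r-1)/2=\tfrac12\big((r-1)d-2\big)$, so $2m+r-1=T(r,d)-3=N-1$ and Proposition \ref{prop5.1} (in the mixed form above) produces such a collection on $\Delta_{N-1}$. If $c_{1,1}$ were also zero, then every nonzero Fourier coefficient of $f_\Phi$ at $\{x_g\}$ would vanish, i.e.\ $\{x_g\}$ would be a Tverberg $r$-partition of $f$ — impossible, since $N=T(r,d)-2<T(r,d)$ and $f$ is Fourier generic. Hence $c_{1,1}\neq 0$; as $r$ is prime, $1\in\mathbb{Z}_r^\times$, so $\{f_\Phi(x_g)\}=\{(c_{1,0}+c_{1,1}\zeta_r^{\,g},\,c_{2,0},\dots,c_{d-1,0})\}$ is the vertex set of a regular $r$-gon lying in an affine plane parallel to the first coordinate plane, and applying $\Phi^{-1}$ yields a regular $r$-gon parallel to $U$.

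For part (b), with $d$ even identify $\mathbb{R}^d=\mathbb{C}^{d/2}$ and view $f:\Delta_{N-1}\to\mathbb{C}^{d/2}$. Again with $G=\mathbb{Z}_r$, annihilate $c_{i,g}=0$ for all $g\in\mathbb{Z}_r-\{0,1\}$ and all $1\leq i\leq d/2$; here $m=(d/2)(r-2)$, so $2m+r-1=(r-2)d+r-1=T(r,d)-d-1=N-1$ and Proposition \ref{prop5.1}(a) applies directly. This gives $f(x_g)=\mathbf{c}_0+\zeta_r^{\,g}\,\mathbf{c}_1$ with $\mathbf{c}_j=(c_{1,j},\dots,c_{d/2,j})\in\mathbb{C}^{d/2}$. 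If $\mathbf{c}_1=\mathbf{0}$ then $\{x_g\}$ is a Tverberg $r$-partition, again contradicting Fourier genericity since $N<T(r,d)$; hence $\mathbf{c}_1\neq\mathbf{0}$ and $\{f(x_g)\}$ is the vertex set of a regular $r$-gon inscribed in the complex $1$-flat $\{\mathbf{c}_0+z\mathbf{c}_1:z\in\mathbb{C}\}$.

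There is no genuinely difficult step: the argument is a line-by-line copy of the affine proof of Theorem \ref{thm4.1}, and the single conceptual input — that over $G=\mathbb{Z}_p$ coefficients can be annihilated as freely in the continuous category as in the affine one, in the same tight dimension — is exactly Proposition \ref{prop5.1}. What requires a little attention is the bookkeeping: confirming the counts $m$ above coincide with the dimension that the Tverberg number supplies, and checking that Proposition \ref{prop5.1}(a) indeed survives passage to the mixed target $\mathbb{C}^e\oplus\mathbb{R}$ needed in part (a) when $d$ is odd (it does, with the same proof via Lemma \ref{lem5.2}).
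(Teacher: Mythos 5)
Your proposal is correct and takes exactly the approach the paper intends: the paper's ``proof'' of Theorem~\ref{thm5.2} is literally the one sentence preceding it, saying that Proposition~\ref{prop5.1} gives the topological extensions of Theorem~\ref{thm4.1} ``with the same proofs as in the affine setting,'' and you carry that out faithfully, with correct bookkeeping ($m=(r-2)+(d-2)(r-1)/2$ giving $2m+r-1=T(r,d)-3=N-1$ for part (a), and $m=(d/2)(r-2)$ giving $N-1$ for part (b)) and the correct use of Fourier genericity to rule out $c_{1,1}=0$.

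One small remark. You rightly notice that Proposition~\ref{prop5.1}(a) as stated only covers targets $\mathbb{C}^d$, so for part (a) with $d$ odd one needs the mixed-target version $\mathbb{C}^{e}\oplus\mathbb{R}$ (or $\mathbb{C}^{e}\oplus\mathbb{R}^{d'}$) for odd $p$ --- this is a genuine gap in the paper's exposition that you correctly identify and fill. However, your sketch of how to obtain it points to the deleted-join construction $G^{\ast(N+1)}\to W$ from Theorem~\ref{thm3.1}, whereas the proof of Proposition~\ref{prop5.1} actually uses the deleted \emph{product} $(\Delta_N)^{\times G}_\Delta$ together with the normalized map to the unit sphere $S(\mathbb{C}^m)$. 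Both configuration spaces work here (the join is $(N-1)$-connected and $N$-dimensional, so Lemma~\ref{lem5.2} applies against $S(W)\cong S^{N-1}$ just as well), so this is a cosmetic deviation, not an error; but if you intend to say ``identical construction to Proposition~\ref{prop5.1}'' you should switch to the deleted product and observe that the extra real coordinates simply contribute $(r-1)/2$ nonconjugate characters each to the direct sum $\oplus_{i,h}\chi_h$, all of which remain nontrivial because $p$ is odd, so $S(\mathbb{C}^m)$ stays a free $G$-sphere.
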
 

\begin{proof}[Proof of Proposition \ref{prop5.1}] 

We shall use the other main configuration space used for Tverberg problems, namely the deleted product 
		 \begin{equation}\label{eq:5.1} (\Delta_N)^{\times G}_\Delta = \{(x_g)_{g\in G}\mid  x_g\in \sigma_g\,\, \forall g\in G,\,\,\, \text{and}\,\,\, \sigma_g\cap \sigma_{g'}=\emptyset\,\, \forall\,\, g\neq g'\}. \end{equation}  
		 
For consistency of notation of notation with [25], we let $G=\mathbb{Z}_p^{\oplus k}$ act freely on $(\Delta_N)^{\times G}_\Delta$ by \textit{left} translations, so that $g'\cdot (x_g)_{g\in G} =(x_{g'+g})_{g\in G}$. It is a crucial fact (first proved in [4]) that $(\Delta_N)^{\times G}_\Delta$ is $(N-|G|+1)$-dimensional and $(N-|G|)$--connected. It therefore has vanishing reduced cohomology $\tilde{H}^i(X;\mathbb{Z}_p)$ for all $1\leq i \leq N-|G|$ by the Hurewicz theorem. For (a), let $f:\Delta_N\rightarrow \mathbb{C}^d$ and $p$ odd. As in the proof of Theorem \ref{thm3.1} (or [25]), let $\mathcal{F}: (\Delta_N)^{\times G} \rightarrow \mathbb{C}^m$ be given by the evaluation of the given Fourier coefficients, i.e., $x=(x_g)_{g\in G} \mapsto \sum_{g\in G} \frac{1}{|G|} f(x_g)\chi_{i,h}^{-1}(g)$ for each $h\in S_i$ and each $1\leq i\leq d$.  As before, this map is $G$-equivariant map with respect to the linear $G$-action on $\mathbb{C}^m$ given by $\oplus_{i,h}\chi_h$. If no zero for this map exists, then $x\mapsto \mathcal{F}(x)/\|\mathcal{F}(x)\|$ would define a $G$-equivariant map from $(\Delta_N)^{\times G}$ to the $(2m-1)$-dimensional unit sphere $Y=S(\mathbb{C}^m)$, violating Volovikov's Lemma for $N=2m+|G|-1$. Thus we have the desired $\{x_g\}_{g\in G}$. The proof of (b) is identical, with $f:\Delta_N\rightarrow \mathbb{R}^d$ and $G=\mathbb{Z}_2^{\oplus k}$.\end{proof} 
		
For the $k=1$ and $r=4$ case of Theorem \ref{thm1.3}, we use [25, Theorem 3.2] with $d=1$. As applied to $P_r$-partitions, this asserts that if the polynomial $q(y)=(r-1)!y^{r-2}$ is non-zero in $\mathbb{Z}[y]/(ry)$, then there exists some $\{x_g\}_{g\in \mathbb{Z}_r}$ from pairwise disjoint faces such that $c_{1,i}=0$ for all $2\leq i<r$ in the Fourier expansion (\ref{eq:2.2}) of the given $f: \Delta_{3r-5}\rightarrow \mathbb{C}$. Clearly, $q(y)\neq 0$ for non--prime $r$ iff $r=4$. 

\section{Two Constrained Versions}

The ``constraint" method of Blagojevi\'c, Frick, and Ziegler [7]  has proven to be a powerful tool for producing a number of interesting variants of the affine and topological Tverberg theorems with surprising ease. In particular, it was crucial in demonstrating counterexamples to the Topological Tverberg Conjecture for non prime powers [6, 12]. We give two example applications of this method to our schema. 

\subsection{van Kampen--Flores type theorems} 

	Given a continuous map $f:\Delta_N\rightarrow \mathbb{R}^d$, one may seek a $r$--Tverberg partition for which each pairwise disjoint face lies in the $k$--skeleton $\Delta_N^{(k)}$ of the simplex. Such dimensionally constrained versions of Tverberg's Theorem were first given in the continuous setting by van Kampen [14] and Flores [11] when $r=2$, extended to all prime powers $r$ and more general $j$--wise intersection types in [22, 30], and subsequently sharpened in [7].  For our purposes, we only consider the following [7, Theorem 6.3]:
	
\begin{theorem}\label{thm6.1} Let $rk\geq d(r-1)$ and $N=(r-1)(d+2)$. Then for any continuous map $f:\Delta_N\rightarrow \mathbb{R}^d$, there exists $r$ disjoint faces $\sigma_1,\ldots, \sigma_r$  with $\dim\sigma_i\leq k$ for all $1\leq i \leq r$ such that $\cap_{i=1}^rf(\sigma_i)\neq \emptyset$. \end{theorem}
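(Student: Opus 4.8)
\textbf{Proof proposal for Theorem \ref{thm6.1}.} The statement being a result of Blagojevi\'c--Frick--Ziegler quoted from [7], the plan is to recover it via the constraint method, which reduces a dimensionally restricted Tverberg statement to the (topological) Tverberg theorem in one higher dimension. First I would recall the Topological Tverberg theorem in the form needed: for $r$ a prime power and $M = T(r,D) - 1 = (r-1)(D+1)$, any continuous map $g:\Delta_M\to\mathbb{R}^D$ admits $r$ pairwise disjoint faces whose images have a common point. (For the affine statement one may instead invoke Tverberg's original theorem, valid for all $r$; the excerpt's discussion indicates the topological version is the one in play for prime powers, and this is the regime in which [7] operates — I would state the hypothesis on $r$ accordingly, or simply cite [7, Theorem 6.3] for the exact generality.)

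The core construction: given $f:\Delta_N\to\mathbb{R}^d$ with $N=(r-1)(d+2)$, set $D=d+1$ and define an auxiliary map $\widehat{f}:\Delta_N\to\mathbb{R}^{d+1}=\mathbb{R}^d\times\mathbb{R}$ by $\widehat{f}(x)=(f(x),\,\dist(x,\Delta_N^{(k)}))$, where the last coordinate records the distance from $x$ to the $k$-skeleton in the standard metric on $\Delta_N$. Since $N = (r-1)(d+2) = (r-1)((d+1)+1) = T(r,d+1)-1$, the Topological Tverberg theorem (in dimension $D=d+1$) applied to $\widehat{f}$ yields $r$ pairwise disjoint faces $\sigma_1,\ldots,\sigma_r$ and points $x_i\in\sigma_i$ with $\widehat{f}(x_1)=\cdots=\widehat{f}(x_r)=:(\,y,\,t\,)$. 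In particular all $x_i$ lie at the \emph{same} distance $t$ from the $k$-skeleton, and their $f$-images all equal $y$, so $\bigcap_{i=1}^r f(\sigma_i)\ni y$ automatically. It remains to force $t=0$, i.e.\ that each $x_i$ — hence, one checks, each carrier face $\sigma_i$ — lies in $\Delta_N^{(k)}$.

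The key step is the pigeonhole/dimension-count that makes $t=0$ unavoidable, and this is where the hypothesis $rk\geq d(r-1)$ enters. The faces $\sigma_1,\ldots,\sigma_r$ are pairwise disjoint, so $\sum_{i=1}^r(\dim\sigma_i+1)\leq N+1 = (r-1)(d+2)+1$. If some $\sigma_i$ had dimension $>k$, i.e.\ $\dim\sigma_i\geq k+1$, one derives — using that the distance-to-$k$-skeleton coordinate is strictly positive precisely on the interior of cells of dimension $>k$, and that all $x_i$ share this coordinate value — a contradiction with the counting inequality under the assumption $rk\geq d(r-1)$. Concretely: if $t>0$ then every $x_i$ lies in the relative interior of its carrier and every $\dim\sigma_i\geq k+1$, forcing $\sum(\dim\sigma_i+1)\geq r(k+2)$; combined with the disjointness bound this gives $r(k+2)\leq (r-1)(d+2)+1$, which rearranges to $rk\leq (r-1)d -1 < d(r-1)$, contradicting the hypothesis. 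Hence $t=0$, every $x_i\in\Delta_N^{(k)}$, and (since $\dist(\cdot,\Delta_N^{(k)})$ vanishes on a face iff that face is a subcomplex of the $k$-skeleton, which for a simplex means dimension $\leq k$) we get $\dim\sigma_i\leq k$ for all $i$, as required.

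The main obstacle — and the only genuinely delicate point — is the justification that the last coordinate of $\widehat f$ behaves as claimed on faces: that $\dist(x,\Delta_N^{(k)})$ is zero exactly when the carrier of $x$ has dimension $\leq k$, and is a well-defined continuous function whose vanishing set is the subcomplex $\Delta_N^{(k)}$ itself. This is elementary for the standard simplex but must be stated carefully so that ``$t=0$ for all $x_i$'' genuinely upgrades to ``$\sigma_i\subseteq\Delta_N^{(k)}$''; one also has to observe that the Topological Tverberg partition can be taken with $x_i$ in the relative interior of $\sigma_i$ after replacing $\sigma_i$ by the carrier, so that the $t>0$ case really does force $\dim\sigma_i\geq k+1$. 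Everything else is the bookkeeping inequality above. Since all of this is carried out in detail in [7, Theorem 6.3], I would present the argument in the above compressed form and refer there for the routine verifications.
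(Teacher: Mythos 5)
Your proof is correct and follows essentially the same route as the paper: apply the Topological Tverberg theorem to $f\oplus\dist(\cdot,\Delta_N^{(k)})$ and then use the vertex-count pigeonhole ($r(k+2)>N+1$, forced by $rk\geq d(r-1)$) to conclude the common distance value is zero, hence all carriers lie in the $k$-skeleton; the only cosmetic difference is that you argue by contradiction while the paper argues directly that at least one $\sigma_i$ lies in $\Delta_N^{(k)}$. Your observation that the statement as written in the paper implicitly requires $r$ to be a prime power (as in [7, Theorem 6.3]) is a fair caveat.
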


Theorem \ref{thm6.1} follows directly from the Topological Tverberg theorem by considering an appropriate ``constraint" function, here the distance map $d: \Delta_N\rightarrow \mathbb{R}$ given by $d(x)=\dist(x,  \Delta_N^{(k)})$. For $f\oplus d: \Delta_{(r-1)(d+2)}\rightarrow \mathbb{R}^d\oplus \mathbb{R}$, there exists  $x_1\in \sigma_1,\ldots, x_r\in \sigma_r$ with the $\sigma_i$ disjoint so that both $f(x_i)$ and $d(x_i)$ are constant. As $r(k+2)>N$, the pigeon hole principle shows that at least one $\sigma_i$ is from $\Delta_{N-1}^{(k)}$, and since $d$ is constantly zero so are all the others. The necessity of $rk\geq d(r-1)$ follows from the optimality of Tverberg's theorem, since one has in particular a Tverberg $r$--partition for $f$ restricted to some $\Delta_{(k+1)r-1}$ face.\\

In even dimensions, the same method produces dimensionally restricted topological $P_r$--partitions when the dimension of the simplex of Theorem \ref{thm6.1} is lowered by $d$: 

\begin{theorem}\label{thm6.2} Let $r$ be an odd prime, let $d$ be even, and let $N=(r-2)(d+2)+2$. Suppose further that $d(r-2)\leq rk <d(r-1)$. If $f: \Delta_N\rightarrow \mathbb{R}^d$ is a  Fourier generic continuous map, then there exists $x_1\in \sigma_1,\ldots, x_r\in \sigma_r$  from pairwise disjoint faces with $\dim \sigma_i\leq k$ for all $1\leq i \leq r$ such that $f(x_1),\ldots, f(x_r)$ are the vertices of a regular $r$-gon.
\end{theorem}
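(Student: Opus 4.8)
The plan is to mimic the constraint-method argument that derives Theorem \ref{thm6.1} from the Topological Tverberg theorem, but with the Topological Tverberg theorem replaced by the continuous $P_r$--partition result of Theorem \ref{thm1.3}(a) (equivalently, by Proposition \ref{prop5.1}(a) with $k=1$, $d=1$ in the complex coordinate), applied to the map $f$ augmented by a distance constraint. Concretely, given the even dimension $d$, set the constraint function $\mathrm{dist}(\,\cdot\,,\Delta_N^{(k)}):\Delta_N\to\mathbb{R}$ measuring distance to the $k$--skeleton, and consider the augmented map $F=f\oplus \mathrm{dist}(\,\cdot\,,\Delta_N^{(k)}):\Delta_N\to\mathbb{R}^d\oplus\mathbb{R}=\mathbb{R}^{d+1}$. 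Since $d$ is even, $d+1$ is odd, so $\mathbb{R}^{d+1}$ is the right dimension to look for a $P_r$--partition of $F$ in which the regular $r$--gon lies in a coordinate plane living inside the $\mathbb{R}^d$--factor and the last ($\mathbb{R}$) coordinate is constant: this is exactly the kind of partition Theorem \ref{thm1.3}(a) (respectively Theorem \ref{thm5.2}) produces for $r$ an odd prime. I would first check that $N=(r-2)(d+2)+2$ is precisely the simplex dimension required by that theorem in dimension $d+1$, namely $N=T(r,d)-2=(r-1)(d+1)-1$... wait — one must instead use the even-dimensional variant: since $d+1$ is odd we apply the ``$P_r$ lies in a complex $1$--flat'' mechanism of Theorem \ref{thm5.2}(b)/Theorem \ref{thm4.1}(b) to the even-dimensional part, so that the relevant count is $N=T(r,d)-d+\text{(one extra constraint coordinate)}$; the arithmetic $(r-2)(d+2)+2=(r-2)(d+1)+r$ is what needs to match, and I would verify this identity at the outset.

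Granting that the dimension count works out, the argument runs as follows. Apply the continuous $P_r$--partition theorem to $F$: there exist $x_1\in\sigma_1,\dots,x_r\in\sigma_r$ from pairwise disjoint faces such that $F(x_1),\dots,F(x_r)$ are the vertices of a regular $r$--gon living in a plane which — by the coordinate-placement freedom in Theorem \ref{thm1.3}(a) — can be prescribed to sit inside the $\mathbb{R}^d$ factor; equivalently, the last coordinate $\mathrm{dist}(x_i,\Delta_N^{(k)})$ is the same real number $\delta$ for all $i$, and $f(x_1),\dots,f(x_r)$ form a regular $r$--gon in $\mathbb{R}^d$. It remains to force $\delta=0$ and to force $\dim\sigma_i\le k$. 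Here the pigeonhole step: the total vertex count used is $N+1=(r-2)(d+2)+3$, and if every $\sigma_i$ had dimension $\ge k+1$ (i.e. at least $k+2$ vertices) we would need at least $r(k+2)$ vertices; the hypothesis $rk<d(r-1)$ rearranges to $r(k+2)>N+1$ — this is the inequality I would check carefully — so at least one $\sigma_{i_0}$ has $\dim\sigma_{i_0}\le k$, whence $\mathrm{dist}(x_{i_0},\Delta_N^{(k)})=0$, hence $\delta=0$, hence every $x_i\in\Delta_N^{(k)}$ and every $\sigma_i$ has $\dim\sigma_i\le k$ (using that the support faces are minimal, or simply that $x_i$ lies in the $k$--skeleton so its carrier is at most $k$--dimensional). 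The lower bound $d(r-2)\le rk$ is what guarantees that $F$ restricted to a face of the form $\Delta_{(k+1)r-1}$ is still in the regime $N<T(r,\cdot)$ so that Fourier genericity is not vacuous and the $P_r$--gon produced is genuinely non-degenerate (the coefficient $c_{1,1}\ne 0$, in the notation of the proof of Theorem \ref{thm4.1}); I would invoke genericity exactly as in that proof to rule out the degenerate ``all points coincide'' case.

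The main obstacle I anticipate is \emph{not} the pigeonhole bookkeeping but rather making the coordinate placement of the regular $r$--gon compatible with the constraint coordinate. Theorem \ref{thm1.3}(a) guarantees a $P_r$ parallel to a prescribed coordinate plane, but here we are in odd dimension $d+1$ and want the polygon to avoid the distinguished last axis entirely while the last coordinate is simultaneously forced constant; this is precisely the content of the ``$P_r$ in a complex $1$--flat'' extension, so I would route the argument through Theorem \ref{thm5.2}(b) (continuous, $r$ odd prime) applied in dimension $d+1$ rather than through Theorem \ref{thm1.3}(a) directly — checking that annihilating $c_{i,g}=0$ for all $g\in\mathbb{Z}_r-\{0,1\}$ and $1\le i\le d+1$, together with genericity forcing $c_{1,1}\ne 0$ and the $(d+1)$-st Fourier coefficients to vanish appropriately, yields both that $f(x_1),\dots,f(x_r)$ form a regular $r$--gon and that $\mathrm{dist}(x_i,\Delta_N^{(k)})$ is constant. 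The secondary subtlety is that $\mathrm{dist}(\,\cdot\,,\Delta_N^{(k)})$ is continuous but only piecewise-affine, so Fourier genericity of $f$ does not automatically transfer to $F=f\oplus\mathrm{dist}(\,\cdot\,,\Delta_N^{(k)})$; I would address this exactly as in \cite{} — noting that the constraint coordinate is only ever asked to be \emph{constant}, i.e. its nonzero Fourier coefficients are all annihilated, which is a consequence of the topological (Volovikov) argument and requires no genericity on that coordinate, so genericity is needed only for the $\mathbb{R}^d$--part, which is $f$ itself.
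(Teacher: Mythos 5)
Your overall strategy is the same as the paper's: augment $f$ by the distance-to-$k$-skeleton constraint, annihilate the appropriate Fourier coefficients via Proposition \ref{prop5.1} (with $G=\mathbb{Z}_r$), use a pigeonhole argument to force one (hence all) $\sigma_i$ into the $k$-skeleton, and invoke Fourier genericity to rule out a degenerate (collapsed) $r$-gon. Your arithmetic identity $(r-2)(d+2)+2=(r-2)(d+1)+r$ is correct, and your observation that genericity is needed only for the $\mathbb{R}^d$ part (the constraint coordinate being handled purely by annihilation) is also right and is exactly how the paper reasons.

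However, you have the roles of the two hypothesis inequalities swapped. You claim the upper bound $rk<d(r-1)$ yields the pigeonhole $r(k+2)>N+1$, but in fact it gives an \emph{upper} bound on $r(k+2)$ (namely $r(k+2)<d(r-1)+2r$), which is the wrong direction. The pigeonhole $N<r(k+2)$ is equivalent to $rk>(r-2)d-2$, which follows from the \emph{lower} bound $d(r-2)\le rk$. Conversely, the lower bound is not what ensures non-degeneracy: the point is that the $x_g$ all lie in a face of dimension $n=r(k+1)$, and the \emph{upper} bound $rk<d(r-1)$ gives $n<T(r,d)-1$, so Fourier genericity rules out a Tverberg $r$-partition and hence forces some $c_{i,1}\neq 0$. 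So your flagged ``inequality I would check carefully'' would indeed have failed the check; once the two hypotheses are assigned their correct jobs, your proof coincides with the paper's.
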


In particular, Theorem \ref{thm6.2} holds for almost every affine map. Letting $d=r-1$ and $k=r-2$, we see that $N=T(r,r-1)-1$ matches that of the Topological Tverberg theorem:

\begin{corollary}\label{cor6.3} Let $r$ be an odd prime. For any Fourier generic continuous map $f:\Delta_{r(r-1)}\rightarrow \mathbb{R}^{r-1}$, there exist $x_1\in \sigma_1,\ldots, x_r\in \sigma_r$  from pairwise disjoint faces with $\dim \sigma_i\leq r-2$ for all $1\leq i \leq r$ such that $f(x_1),\ldots, f(x_r)$ are the vertices of a regular $r$-gon.
\end{corollary}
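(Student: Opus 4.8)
The plan is to derive Corollary \ref{cor6.3} directly from Theorem \ref{thm6.2} by the substitution $d = r-1$ and $k = r-2$, checking that all hypotheses are met. First I would observe that $d = r-1$ is even precisely because $r$ is an odd prime, so the parity hypothesis of Theorem \ref{thm6.2} holds. Next I would verify the dimensional inequality $d(r-2) \leq rk < d(r-1)$ with these values: the left side reads $(r-1)(r-2) \leq r(r-2)$, which is clear since $r-1 < r$; the right side reads $r(r-2) < (r-1)^2 = r^2 - 2r + 1$, i.e. $r^2 - 2r < r^2 - 2r + 1$, which is obviously true. So the skeleton constraint $\dim \sigma_i \leq k = r-2$ is the admissible one covered by the theorem.

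Then I would compute the dimension of the simplex. Theorem \ref{thm6.2} requires $N = (r-2)(d+2) + 2$; substituting $d = r-1$ gives $N = (r-2)(r+1) + 2 = r^2 - r - 2 + 2 = r^2 - r = r(r-1)$. This matches the index in the statement $f : \Delta_{r(r-1)} \to \mathbb{R}^{r-1}$, and I would note in passing that $r(r-1) = T(r,r-1) - 1$, i.e. one less than the simplex dimension in the Topological Tverberg theorem for maps to $\mathbb{R}^{r-1}$, which is the point of stating the corollary (the $P_r$-partition conclusion is obtained "for free" at the same dimension where Tverberg's theorem itself barely fails to apply, while still enforcing the $(r-2)$-skeleton restriction).

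Having checked the hypotheses, I would simply invoke Theorem \ref{thm6.2}: for any Fourier generic continuous $f : \Delta_{r(r-1)} \to \mathbb{R}^{r-1}$ there exist $x_1 \in \sigma_1, \ldots, x_r \in \sigma_r$ from pairwise disjoint faces with $\dim \sigma_i \leq r-2$ such that $f(x_1), \ldots, f(x_r)$ are the vertices of a regular $r$-gon, which is exactly the conclusion of the corollary. There is essentially no obstacle here — the entire content of the corollary is already in Theorem \ref{thm6.2}, and the only task is the elementary arithmetic verification that the parameter choices $d = r-1$, $k = r-2$ are in the allowed range and produce the stated simplex dimension; the "main step," such as it is, is recognizing that $d(r-1) = (r-1)^2$ strictly exceeds $rk = r(r-2)$ by exactly $1$, so the hypotheses are satisfied but only just.
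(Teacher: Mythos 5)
Your proposal is correct and matches the paper's own derivation exactly: both obtain the corollary by specializing Theorem \ref{thm6.2} to $d=r-1$, $k=r-2$, noting that $d$ is even because $r$ is odd, that $d(r-2)\leq rk<d(r-1)$ holds (in fact with equality margin exactly $1$ on the right), and that $N=(r-2)(r+1)+2=r(r-1)=T(r,r-1)-1$. Nothing further is needed.
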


 In the affine setting, Corollary \ref{cor6.3} states that for almost any set of size $T(r,r-1)$ in $\mathbb{R}^{r-1}$, it is possible to remove a single point so that the remaining $T(r,r-1)-1$ points can  be $P_r$--partitioned by subsets of at most $r-1$ points each.  A picture of this situation when $r=3$ is given below, with the dashed lines indicating a full Tverberg partition of 7 points in the plane: 
 
 \vspace*{-.05in}
 
\begin{center} \includegraphics[scale=.45]{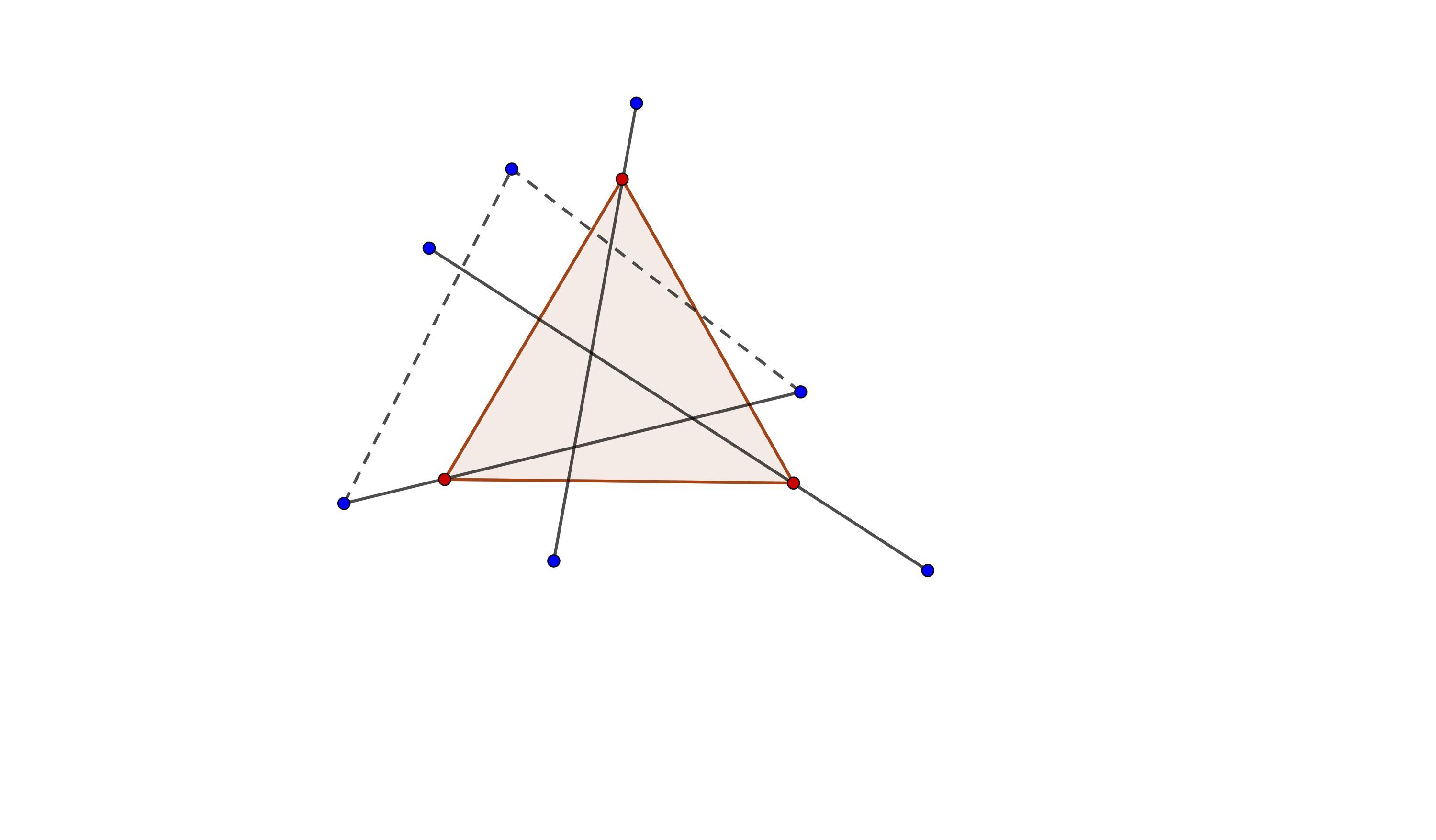} \end{center} \vspace*{-.5in} 

\begin{proof}[Proof of Theorem \ref{thm6.2}] Let $f:\Delta_N\rightarrow \mathbb{C}^d$ be Fourier generic. Letting $f_{d+1}: \Delta_N\rightarrow \mathbb{R}$ be the distance map $f_{d+1}(x)=\dist(x,\Delta_N^{(k)})$ above, again consider $f\oplus f_{d+1}: \Delta_N\rightarrow \mathbb{C}^d\oplus \mathbb{R}$. For $N=2d(r-2)+2\cdot \frac{r-1}{2}+r-1=(r-2)(2d+2)+2$,  Proposition \ref{prop5.1} applied to $G=\mathbb{Z}_r$ guarantees some $\{x_g\}_{g\in \mathbb{Z}_r}$ from pairwise disjoint $\sigma_g$ such that (1) $c_{i,h}=0$ for all $h\in \mathbb{Z}_r-\{0,1\}$ for all $1\leq i \leq d$, as well as (2) that $c_{d+1,h}=0$ for all $h\in \mathbb{Z}_r-\{0\}$. Condition (2) guarantees that $f_{d+1}(x_g)$ is constant. Again one has $N<r(k+2)$ (since $rk\geq 2d(r-2)$), so as before each $\sigma_g$ comes from the $k$--skeleton. As in the proof of Theorem \ref{thm4.1}(a), (1) implies that the $f(x_g)$ 
 are the vertex set of a regular $r$--gon (and in fact one lying in a complex 1--flat), provided at least one of the $c_{i,1}$ is non--zero, $1\leq i \leq d$. On the other hand, the $x_g$ reside in some $\Delta_n$, $n=r(k+1)$. As $rk <2d(r-1)$, $n<T(r,2d)-1$, and so by Fourier generiticity $f$ does not admit a $r$--Tverberg partition. Thus $c_{i,1}\neq 0$ for some $1\leq i \leq d$.\\ \end{proof} 
 
 \subsubsection{Relation to the square peg problem and its variants} As observed by the anonymous referee, the partitions of Corollary \ref{cor6.3} above bear a certain similarity to the famous square peg problem originally posed by Topelitz in 1911 [27] which asks if any simple closed curve in the plane contains the vertices of a square, as well as to variants of that problem which ask for the  vertices of (a similar copy of) some prescribed convex $d$--polytope given any closed $(d-1)$--manifold in $\mathbb{R}^d$ (see, e.g., the review [16]).
   
  In particular, the $r=3$ case of Corollary \ref{cor6.3} can be compared to the fact that any simple closed curve in the plane inscribes any prescribed triangle (up to similarity), and in fact infinitely many such triangles [18]. For the expected value $N=10$, cohomological considerations show that our methods do not allow for a $P_4$--partition of any $f:\Delta_{10}\rightarrow \mathbb{R}^2$ in which each vertex of the square arises from the 1--skeleton. This matches the fact that the square peg problem remains unresolved in the purely continuous setting. As with that problem (see, e.g. [17]), we can show nonetheless that if $f:\Delta_{10} \rightarrow \mathbb{R}^2$ is Fourier generic, then there exist 4 points form $\Delta_{10}^{(1)}$ with pairwise disjoint support whose images are the vertices of a rectangle, and in fact one whose sides can be prescribed parallel to the coordinate axes. To see this, extend $f$ to $f\oplus d: \Delta_{10}\rightarrow \mathbb{R}^2\oplus \mathbb{R}$ as in the proof of Theorem \ref{thm6.2}, where $d$ gives the distance to the 1--skeleton. As there and as in the proof of Theorem \ref{thm1.3}(b), it follows that there exists $\{x_g\}_{g\in \mathbb{Z}_2^{\oplus 2}}\subset \Delta_{10}^{(1)}$ such that each $1\leq i \leq 2$ one has $c_{i,h}=0$ for all $h\neq 0, \mathbf{e}_i$ in the Fourier expansion \eqref{eq:2.2} of $f$. As $f$ is Fourier generic, restricting to the $\Delta_7$ face containing $\{x_g\}_{g\in G}$ shows that $c_{i,\mathbf{e}_i}\neq0$ for each $i=1,2$. Thus the rectangle determined by the $\{f(x_g)\}_{g\in G}$ is non--degenerate.

\subsection{Colored versions with equal barycentric coordinates} 

Another type of variants on Tverberg's theorem considers a coloring of the vertices of the simplex $\Delta_{rn-1}$  by $n$ color classes $C_1,\ldots, C_n$ with $r$ points each. Given a map $f: \Delta_{rn-1}\rightarrow \mathbb{R}^d$, one seeks a Tverberg $r$--partition $\sigma_1,\ldots, \sigma_r$ so that the vertices of each $\sigma_i$ consists of a single point from each color class (such $\sigma_i$ are called ``rainbow" faces). The celebrated B\'ar\'any--Larman conjecture [3], which concerns the classical case $n=d+1$, claims that such ``colorful" Tverberg $r$--partitions exist for all affine maps, and likewise in the continuous setting [31]. This has been verified in the affine cases for all $r$ provided $d=2$ [3], and topologically for all $d$ if $r+1$ is  prime [8].

	In the affine setting [26], Sober\'on proved a variant of this conjecture with the additional condition of equal barycentric coordinates [26, Theorem 1]. This was subsequently recovered in [7], where it was extended to the continuous realm for all prime powers [7, Theorem 8.3 and 8.1]. We give this as the unified statement Theorem \ref{thm6.4} below. By equal barycentric coordinates, one means the following: Let $\Delta_{rn-1}$ be partitioned by $n$ color classes $C_1,\ldots, C_n$ of $r$ points each. Given pairwise disjoint rainbow faces $\sigma_1,\ldots, \sigma_r$, let   $\{v_j^1,\ldots, v_j^n\}$ denote the vertex set of $\sigma_j$, $v_j^i\in C_i$. Each $x_j\in \sigma_j$ is the unique convex sum $x_j=\sum_{i=1}^n t_{i,j} v_j^i$ of one vertex from each color class, and these $t_{i,j}$ are called the barycentric coordinates of $x_j$. One says that $x_1\in \sigma_1,\ldots, x_r\in \sigma_r$ have equal barycentric coordinates provided that $t_{i,j}=t_i$ is independent of $j$ for each $1\leq i \leq n$. 
	 
\begin{theorem}\label{thm6.4} Let $r\geq 2$, $n=(r-1)d+1$, and $N=rn-1$. Suppose that the vertex set of $\Delta_N$ are partitioned by $n$ color classes $C_1,\ldots, C_n$ with $r$ points each. Then for any affine map $f:\Delta_N\rightarrow \mathbb{R}^d$, there exist points $x_1,\ldots, x_r$ with equal barycentric coordinates from pairwise disjoint rainbow faces $\sigma_1,\ldots, \sigma_r$ such that  $f(x_1)=f(x_2)=\cdots = f(x_r)$. This result also holds for continuous maps provided $r+1$ is a prime power. 
\end{theorem}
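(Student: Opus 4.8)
Theorem~\ref{thm6.4} is not new: the affine case is Sober\'on's [26, Theorem 1], and the continuous case is [7, Theorems 8.1 and 8.3]. We indicate how one would establish it within the constraint framework of this section.

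Write the vertex set of $\Delta_N$ as $\bigsqcup_{i=1}^{n} C_i$ with $|C_i|=r$, and for $x\in\Delta_N$ with barycentric coordinates $(\lambda_v)_v$ put $w_i(x)=\sum_{v\in C_i}\lambda_v$, the total mass $x$ assigns to colour class $i$. The two requirements of the theorem are each captured by an auxiliary map. First, $w=(w_1,\ldots,w_{n-1}):\Delta_N\rightarrow\mathbb{R}^{n-1}$ is affine, and whenever $x_1,\ldots,x_r$ lie on pairwise disjoint \emph{rainbow} faces, $w(x_j)$ is precisely the vector of barycentric coordinates of $x_j$ within its face; thus the equal--barycentric--coordinate condition is exactly $w(x_1)=\cdots=w(x_r)$. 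Second, let $K=C_1\ast\cdots\ast C_n\subseteq\Delta_N$ be the rainbow subcomplex, each $C_i$ regarded as $r$ isolated vertices; a face of $\Delta_N$ lies in $K$ iff it is rainbow, so the distance map $c(x)=\dist(x,K)$ detects rainbow faces exactly as in the rest of Section~6.

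The plan in the affine case is to apply the classical Tverberg theorem, valid for every $r$, to $f\oplus w:\Delta_N\rightarrow\mathbb{R}^d\oplus\mathbb{R}^{n-1}$. The dimensions match: $d+(n-1)=rd$, and since $n=(r-1)d+1$ one has $N=rn-1=(r-1)(rd+1)=T(r,rd)-1$, so Tverberg's theorem yields pairwise disjoint faces $\sigma_1,\ldots,\sigma_r$ and points $x_j\in\sigma_j$ with $f(x_1)=\cdots=f(x_r)$ and $w(x_1)=\cdots=w(x_r)$. The remaining, genuinely delicate, step is to arrange that the $\sigma_j$ are rainbow, since only then does the equality of the $w$--values translate into equal barycentric coordinates. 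This cannot be bought by simply also feeding $c$ into the test map: doing so would require $N=T(r,rd+1)-1=(r-1)(rd+2)$ vertices, overshooting the available $rn=(r-1)(rd+1)$ by $r-1$. Instead the colour structure must be used directly, and this is where Sober\'on's argument enters, resting on B\'ar\'any's colourful Carath\'eodory theorem [2] --- the same result underlying Sarkaria's Theorem~\ref{thm3.2} --- to force the Tverberg partition of $f\oplus w$ to be representable by rainbow faces in this dimension. I expect this rainbow--realizability step, rather than the constancy of the images under $f$, to be the main obstacle.

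For continuous $f$ one runs the same scheme with the affine Tverberg theorem replaced by a topological, colour--respecting Tverberg--type statement, so that the rainbow faces are produced directly; this is exactly where the hypothesis that $r+1$ be a prime power enters, through the equivariant methods available for the relevant symmetric--group actions used in [7, 8]. The constraint $c$ then plays only its usual Section~6 role of prescribing which faces occur, and Fourier genericity is not required here. Complete arguments are in [26, 7].
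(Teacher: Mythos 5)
The paper does not actually prove Theorem \ref{thm6.4}: it is cited as Sober\'on's [26, Theorem 1] in the affine case and as [7, Theorems 8.1 and 8.3] in the topological one, and you correctly recognize this and set up the right ingredients (the colour--mass map $w=(w_1,\ldots,w_{n-1})$ and the dimension count $N=rn-1=T(r,rd)-1$). The real issue is your diagnosis of the ``genuinely delicate'' step. Rainbow--realizability is \emph{not} an obstacle once Tverberg (affine, or topological for prime powers) is applied to $f\oplus w:\Delta_N\to\mathbb{R}^{rd}$, and it does not require B\'ar\'any's colourful Carath\'eodory theorem or any colour--respecting Tverberg result. It is an elementary pigeonhole fact: suppose $\sigma_1,\ldots,\sigma_r$ are pairwise disjoint, $x_j\in\sigma_j$, and $w(x_1)=\cdots=w(x_r)$, and let $\tau_j=\supp(x_j)$. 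If $w_i(x_j)=t_i>0$, then every $\tau_j$ contains at least one vertex of $C_i$; since the $\tau_j$ are pairwise disjoint and $|C_i|=r$, each $\tau_j$ contains exactly one. Hence the supports are rainbow (one may adjoin disjoint vertices from the colour classes with $t_i=0$ to obtain full $n$--rainbow faces), and the equal $w$--values are precisely the equal barycentric coordinates. This is exactly what the constraint argument of [7] supplies, and it is why the distance constraint $c$ is neither used nor needed --- your ``overshoot'' calculation, while arithmetically correct, is a red herring.

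Two smaller points. First, you mischaracterize Sober\'on's original argument in [26]: it is not a post hoc repair of a Tverberg partition for $f\oplus w$ but a direct Sarkaria--type reduction to the colourful Carath\'eodory theorem in which the colour structure is built in from the start; the two proofs (Sober\'on's and the [7] constraint one) are genuinely different routes to the same statement. Second, in the topological case the constraint argument runs on plain topological Tverberg applied to $f\oplus w$ and so needs $r$ a prime power, not a coloured Tverberg theorem as you suggest; you need not invoke [8].
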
 

Using an argument analogous to that of Sarkaria's Linear Borsuk--Ulam, it was shown in [26] that partitions as in Theorem \ref{thm6.4} do not exist for almost any affine map $f:\Delta_{rn-1} \rightarrow \mathbb{R}^d$ if $n<(r-1)d+1$. As with our proof of Fourier generiticity, one sees by the same observations as in Remark \ref{rem1} that the lack of these partitions for continuous maps in these dimensions is also typical. We shall therefore call such continuous maps \textit{Sober\'on generic}. Removing $dr$ color classes in Sober\'on's result, we have the following $P_r$--variant in even dimensions:

\begin{theorem}\label{thm6.5} Let $r\geq 3$, $n=(r-2)d+1$, and $N=rn-1$, where $d$ is even. Suppose that the vertex set of $\Delta_N$ is partitioned by $n$ color classes $C_1,\ldots, C_n$ with $r$ points each. Then for almost any affine map $f:\Delta_N\rightarrow \mathbb{R}^d$, there exist points $x_1, \ldots, x_r$ with equal barycentric coordinates from pairwise disjoint rainbow faces $\sigma_1,\ldots, \sigma_r$ so that $f(x_1),\ldots, f(x_r)$ are the vertices of a regular $r$-gon. This also holds for Sober\'on generic continuous maps $f$ when $r$ is an odd prime.
\end{theorem}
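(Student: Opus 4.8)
The plan is to run the equivariant Fourier machinery of Sections 3 and 5 on the configuration space of equal--barycentric rainbow partitions rather than on $G^{\ast(N+1)}$ or $(\Delta_N)^{\times G}_\Delta$. Since $d$ is even, write $d=2\ell$ and identify $\mathbb{R}^d=\mathbb{C}^\ell$, so that $f$ becomes a map $\Delta_N\to\mathbb{C}^\ell$; set $G=\mathbb{Z}_r$. Fix a bijection $\psi_i:\mathbb{Z}_r\to C_i$ for each color class. The $r$--fold join $X:=(\mathbb{Z}_r)^{\ast n}$ parametrizes a common barycentric vector $t=(t_1,\dots,t_n)\in\Delta_{n-1}$ together with, for each $i$ with $t_i>0$, a cyclic reindexing $\beta_i\in\mathbb{Z}_r$ of $C_i$; to such data associate the points $x_g=\sum_i t_i\,\psi_i(\beta_i+g)\in\Delta_N$ ($g\in\mathbb{Z}_r$), lying in the pairwise disjoint rainbow faces $\sigma_g$ spanned by $\{\psi_i(\beta_i+g)\}_{1\le i\le n}$ (when $t_i=0$ the forgotten $\beta_i$ may be filled in arbitrarily). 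The group $\mathbb{Z}_r$ acts freely on $X$ by simultaneous translation of the $\beta_i$, which corresponds to cyclically relabeling the $\sigma_g$. Following the proof of Theorem \ref{thm4.1}(b), I would define the $\mathbb{Z}_r$--equivariant map
\[
\mathcal{F}:X\to W:=\bigoplus_{i=1}^{\ell}\ \bigoplus_{h\in\mathbb{Z}_r\setminus\{0,1\}}\chi_{-h},\qquad \mathcal{F}(x)_{i,h}=\frac1r\sum_{g\in\mathbb{Z}_r}f_i(x_g)\,\chi_h^{-1}(g),
\]
where $W$ carries the $\mathbb{Z}_r$--action $\bigoplus\chi_{-h}$. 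Equivariance is the identity $\chi_h^{-1}(g+g')=\chi_{-h}(g')\chi_h^{-1}(g)$, $\mathcal{F}$ is continuous across the faces of the join, and when $f$ is affine $\mathcal{F}$ is affine on each simplex of $X$ (exactly as in the proof of Theorem \ref{thm3.1}). A zero of $\mathcal{F}$ is precisely an equal--barycentric rainbow partition with $c_{i,h}=0$ for all $h\notin\{0,1\}$ and all $i$, equivalently one with $f(x_g)=\mathbf{c}_0+\zeta_r^{\,g}\mathbf{c}_1$, where $\mathbf{c}_j=(c_{1,j},\dots,c_{\ell,j})$.

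The core of the proof is to produce a zero of $\mathcal{F}$. Note that $\dim_{\mathbb{R}}W=2\ell(r-2)=d(r-2)=n-1$ and that $W$ contains no trivial subrepresentation, since every $h$ occurring is nonzero. In the affine case $X=(\mathbb{Z}_r)^{\ast n}$ is literally $G^{\ast(N'+1)}$ with $N'=n-1=\dim_{\mathbb{R}}W$ and $G=\mathbb{Z}_r$, and $\mathcal{F}$ is affine and $G$--equivariant, so Theorem \ref{thm3.2} applies verbatim and furnishes a zero. In the continuous case with $r$ an odd prime, $(\mathbb{Z}_r)^{\ast n}$ is a free $\mathbb{Z}_r$--complex that is $(n-2)$--connected (being the $n$--fold join of an $r$--point set), hence $\tilde H^i(X;\mathbb{F}_r)=0$ for $1\le i\le d(r-2)-1$; moreover, since $r$ is prime, each $\chi_h$ with $h\ne0$ is faithful, so $\mathbb{Z}_r$ acts freely on the unit sphere $S(W)\cong S^{d(r-2)-1}$, which is an $\mathbb{F}_r$--cohomology $(d(r-2)-1)$--sphere. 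If $\mathcal{F}$ were nowhere zero, $x\mapsto\mathcal{F}(x)/\lVert\mathcal{F}(x)\rVert$ would be a $\mathbb{Z}_r$--equivariant map $X\to S(W)$, contradicting Volovikov's Lemma \ref{lem5.2}; this is the argument of Proposition \ref{prop5.1}(a) with $(\Delta_N)^{\times G}_\Delta$ replaced by $X$.

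It remains to exclude the degenerate output $\mathbf{c}_1=0$. If $\mathbf{c}_1=0$ then $c_{i,1}=0$ for all $i$, so all nontrivial Fourier coefficients of the $f_i$ vanish and $f(x_g)$ is independent of $g$: the $x_g$ then form an equal--barycentric colored Tverberg $r$--partition using only the colors with $t_i>0$, of which there are at most $n=(r-2)d+1<(r-1)d+1$. But for almost every affine $f$, and for every Soberón generic continuous $f$, no equal--barycentric colored Tverberg $r$--partition exists on any subfamily of fewer than $(r-1)d+1$ colors, by the optimality established in [26] (cf. the discussion preceding Theorem \ref{thm6.5}). This contradiction gives $\mathbf{c}_1\ne0$, so the $f(x_g)$ are the vertices of a genuine regular $r$--gon; filling in the undetermined $\beta_i$ produces the required pairwise disjoint rainbow faces carrying equal barycentric coordinates.

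I expect the main obstacle to be bookkeeping rather than a new idea: matching the real dimension $d(r-2)$ of the coefficient space $W$ to the connectivity $n-2$ of $(\mathbb{Z}_r)^{\ast n}$ so that the Borsuk--Ulam--type input (Theorem \ref{thm3.2} in the affine case, Lemma \ref{lem5.2} in the continuous case) applies with no slack, and verifying that $\mathcal{F}$ is equivariant and (affine, resp.\ continuous) across the faces of the join. The one genuinely delicate point is that in the continuous setting primality of $r$ is exactly what makes $S(W)$ a \emph{free} $\mathbb{F}_r$--cohomology sphere of the right dimension — for composite $r$ some $\chi_h$ with $2\le h\le r-1$ fails to be faithful, which is why the continuous statement is restricted to odd primes while the affine argument requires only $h\ne0$ — together with the subfamily version of Soberón's optimality invoked in the non-degeneracy step.
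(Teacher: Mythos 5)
Your proof is correct, but it takes a genuinely different route from the paper's. The paper uses the constraint method of [7]: it works on the full deleted join $(\Delta_N)^{\ast\mathbb{Z}_r}_\Delta\cong\mathbb{Z}_r^{\ast(N+1)}$, and for each color class $C_i$ it introduces the auxiliary map $a_i(x)=\sum_{v_j\in C_i}t_j$, then annihilates all nontrivial Fourier coefficients of $a_2,\ldots,a_n$ (contributing $(n-1)(r-1)$ real conditions) together with the $\ell(r-2)$ polygon coefficients of $f$ and the $r-1$ conditions from $\mathcal{R}$; those constraint coefficients are what force the resulting $r$-tuple to sit in rainbow faces with equal barycentric coordinates. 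You instead parametrize the space of equal-barycentric rainbow configurations directly by the smaller join $X=(\mathbb{Z}_r)^{\ast n}$ and target only the $d(r-2)=n-1$ polygon coefficients. The overall dimension count balances in both setups ($\dim W=\dim(\text{configuration space})$), and the Borsuk--Ulam input is identical (Theorem~\ref{thm3.2} affinely, Lemma~\ref{lem5.2} topologically), but the configuration space and target are different. Your approach is in fact closer in spirit to Sober\'on's original argument in [26], which the paper notes was ``subsequently recovered'' by the constraint method of [7]; one buys cleaner bookkeeping (no auxiliary maps, smaller complex) at the cost of having to verify that $X$ really does parametrize all equal-barycentric rainbow tuples and that $\mathcal{F}$ is affine/continuous across its join faces, both of which you do. Two small remarks. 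First, your displayed equivariance identity depends on whether the $\mathbb{Z}_r$-action on $X$ relabels $x_g\mapsto x_{g+g'}$ or $x_g\mapsto x_{g-g'}$; with your chosen convention $(g'\cdot x)_g=x_{g'+g}$ the covariance character is $\chi_h$ rather than $\chi_{-h}$, but this sign convention is immaterial since in either case $W$ carries only nontrivial characters. Second, your non-degeneracy step does not actually need a ``subfamily'' strengthening of Sober\'on's optimality: a zero of $\mathcal{F}$ with $\mathbf{c}_1=0$ already yields an equal-barycentric rainbow Tverberg $r$-partition of $\Delta_N$ itself (allowing some barycentric coordinates to vanish, as Theorem~\ref{thm6.4} permits), and $n=(r-2)d+1<(r-1)d+1$, so this is directly excluded by Sober\'on genericity as defined in the paper.
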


\begin{proof}[Proof of Theorem \ref{thm6.5}] As with Theorem \ref{thm6.2}, we follow the proof of Theorems 8.1 and 8.3 of [7] using constraints. Let $v_1,\ldots, v_{N+1}$ denote the vertices of $\Delta_N$. For each $1\leq i \leq n$ and $x=\sum_{j=1}^{N+1} t_jv_j$, define $a_i: \Delta_N\rightarrow \mathbb{R}$ by $a_i(x)=\sum_{v_j\in C_i} t_j$. For given $\{x_g\}_{g\in \mathbb{Z}_r}$ from pairwise disjoint $\sigma_g$, the argument there shows that if the $a_i(x_g)$ are constant for each $2\leq i \leq n$, then (1) each $\sigma_g$ is a $n$--rainbow face and (2) the barycentric coordinates of the $x_g$ are all equal (with $t_{i,g}=a_i(x_g)$). Thus one needs to annihilate $r-1$ coefficients for each $a_i$, while for $f:\Delta_N\rightarrow \mathbb{C}^d$, we prescribe $c_{i,g}=0$ for all $g\in \mathbb{Z}_r-\{0,1\}$ and all $1\leq i \leq d$ as before. This can be guaranteed for all $r\geq 3$ in the affine case when $N=2d(r-2)+(n-1)(r-1)+r-1=rn-1$ by Theorem \ref{thm3.1}, as well as for odd primes in the continuous cases by Proposition \ref{prop5.1}. As $n<2d(r-1)+1$, the $f(x_g)$ do not collapse to a single point if they are assumed Sober\'on generic, hence are the vertices of some $P_r$ which as before lies in some complex 1--flat.  \end{proof}

Note that $n=d+1$ in Theorem \ref{thm6.5} when $r=3$. This matches that of the B\'ar\'any--Larman conjecture, which for $r=3$ remains open for all $d>2$ and open in the topological setting if $d=2$. An illustration of our affine planar version is given below: 

\begin{center} \includegraphics[scale=.525]{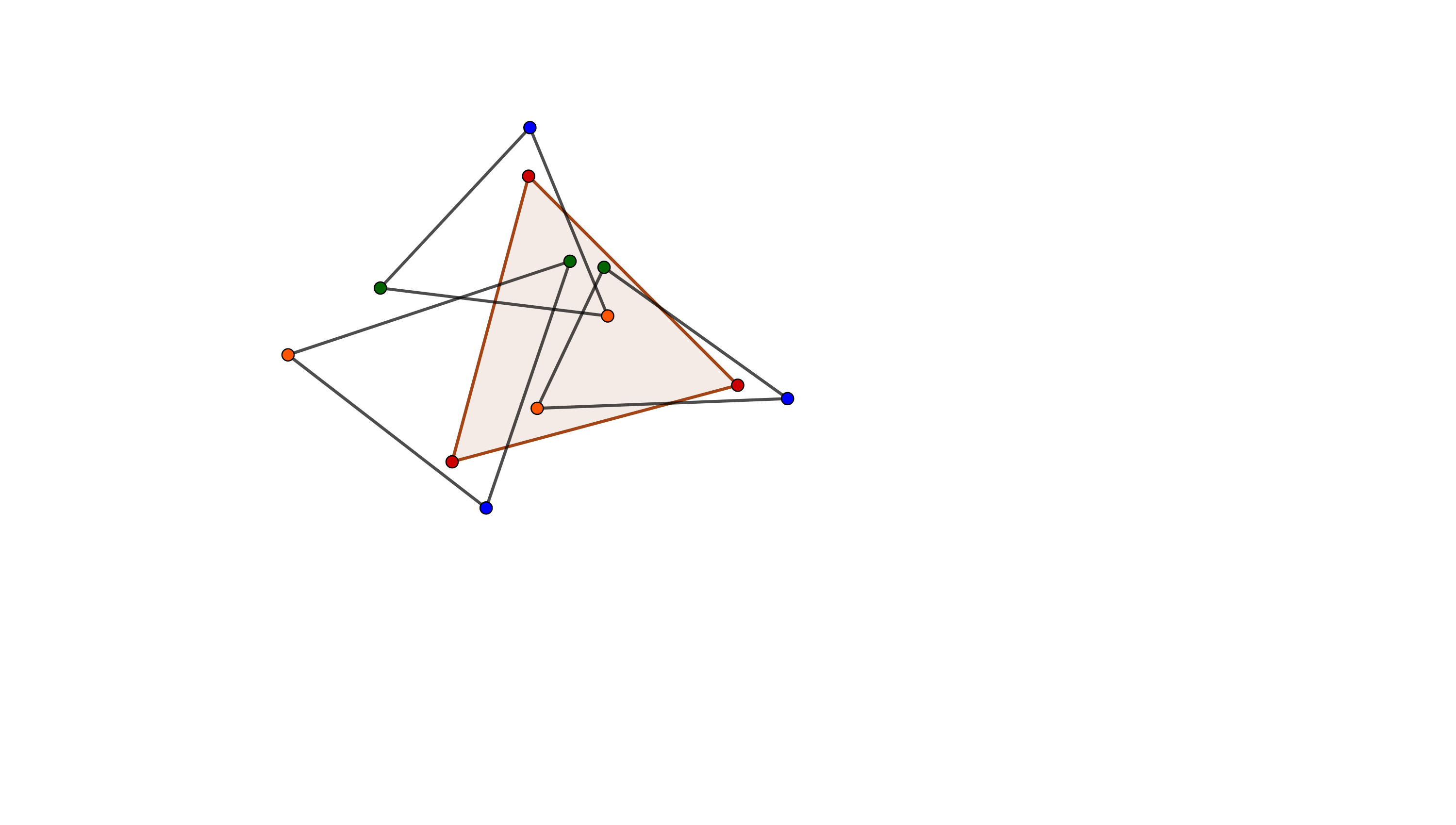}\end{center} 
\vspace*{-.75 in}

\section{Question 1 for Regular Simplicies} 

We conclude with a consideration of Question \ref{quest1} when $P(r,r-1)=\Delta_{r-1}$ is itself a regular $(r-1)$--simplex. As observed by Florian Frick, standard constructions for Tverberg theorems yield an exact value when $r=3$: 

\begin{proposition}\label{prop7.1} $N_{(\Delta_2;d)}= 5$ for all $d\geq 2$. \end{proposition}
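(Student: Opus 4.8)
The plan is to prove $N_{(\Delta_2;d)} = 5$ by establishing the upper and lower bounds separately, using the Fourier-analytic machinery developed in Sections 2--3 specialized to the group $G = \mathbb{Z}_3$ and $d$-dimensional target. For the upper bound $N_{(\Delta_2;d)} \leq 5$, I would work with an affine (or, in the prime-power topological case $r=3$, continuous) Fourier generic map $f : \Delta_4 \to \mathbb{R}^d$. A set of $3$ points $f(x_0), f(x_1), f(x_2)$ forms the vertex set of an equilateral triangle precisely when, after choosing a complex structure on a suitable $2$-flat, the corresponding function $F : \mathbb{Z}_3 \to \mathbb{C}$ has exactly one of its two nontrivial Fourier coefficients nonzero; but for $r=3$ this is \emph{automatic} once the partition is not Tverberg, since $\mathbb{Z}_3$ has only two nontrivial characters $\chi_1, \chi_2$ which are complex-conjugate, so if $F$ is real-valued (or, more to the point, if we only demand one coefficient to vanish in a real sense) the constraint is very mild. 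Concretely, I would project $f$ onto a single coordinate line $L$, apply Theorem \ref{thm3.1} with $G = \mathbb{Z}_3$, $d'=1$ (the real line), $S_1 = \{1\}$ (so $m=1$, $m'=0$, since $|1|=3>2$), giving $N = 2\cdot 1 - 0 + 3 - 1 = 4$; this annihilates $c_{1,1}$ (hence also $c_{1,2} = \overline{c_{1,1}}$) so that the third coordinate becomes constant and the three image points lie in a plane, and genericity forbids a full Tverberg partition so the remaining coefficients do not all vanish. A short argument then shows three coplanar points in generic position, not all equal, that are the orbit of a function on $\mathbb{Z}_3$ with a nontrivial Fourier mode, form an equilateral triangle — this is exactly the $r=3$ special case of the reasoning in Lemma \ref{lem2.1}(a) and the proof of Theorem \ref{thm4.1}. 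So $N_{(\Delta_2;d)} \leq 5$, and in fact the argument suggests $4$ suffices for the dimension-reduction step but one extra point is needed to rule out degeneracy via genericity.

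For the lower bound $N_{(\Delta_2;d)} \geq 5$, I would exhibit, for each $d \geq 2$, a configuration of $4$ points in $\mathbb{R}^d$ (equivalently an affine map $\Delta_3 \to \mathbb{R}^d$) admitting \emph{no} $\Delta_2$-partition. With only $4$ points split into $3$ nonempty parts, the partition type is forced to be $\{2,1,1\}$: one part is an edge $\{a,b\}$ and the other two are singletons $\{c\}, \{d\}$. A $\Delta_2$-partition would then require a point $x$ on segment $[a,b]$ together with $c$ and $d$ forming an equilateral triangle. I would choose the $4$ points in convex position in a plane (say a generic quadrilateral, or four points with pairwise distinct distances chosen so that for every labeling, no point of the segment spanned by two of them is equidistant from the other two at the correct $60^\circ$ angle) so that this fails for all $\binom{4}{2} \cdot \binom{2}{1} = 12$ assignments. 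The cleanest choice is four points with all six pairwise distances distinct and "sufficiently generic" so that the finitely many equilateral-triangle conditions each cut out a measure-zero locus in the space of configurations; hence almost every $4$-point configuration works, which is all the lower-bound claim "$N_{(\Delta_2;d)}$ is the minimum $N$ such that \emph{almost any} $N$ points..." requires. I would phrase this as: the set of bad configurations (those admitting a $\Delta_2$-partition) has positive codimension among all $4$-point configurations.

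The main obstacle I anticipate is \emph{not} the upper bound — that follows quite mechanically from Theorem \ref{thm3.1} and the $r=3$ case of Lemma \ref{lem2.1} — but rather making the lower bound argument airtight and uniform in $d$. Specifically, one must verify that for a $4$-point set the failure of a $\Delta_2$-partition is a generic (codimension $\geq 1$) condition, and in particular that it is nonempty; the subtlety is that a segment $[a,b]$ is a one-parameter family, so "for some $x \in [a,b]$, $\{x,c,d\}$ is equilateral" is not a codimension condition a priori but rather the statement that a certain map from a $1$-manifold (the segment, times the choice of which of two equilateral completions) hits a codimension-$1$ set, and one needs the finitely many such maps (over the $12$ labelings) to generically miss. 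I would handle this by a direct parametrization: fix $c, d$; an equilateral triangle on base $\{c,d\}$ has its apex at one of two determined points $p_\pm$ (the two rotations of $c$ about the midpoint by $\pm 60^\circ$, or more precisely the two points making $\{c,d,p\}$ equilateral), so $\{x,c,d\}$ equilateral forces $x \in \{p_+, p_-\}$, and then one only needs $p_\pm \notin [a,b]$ — a genuinely codimension-$1$ (in fact open, generic) condition on the $4$-tuple $(a,b,c,d)$. This reduces the lower bound to checking that a generic $4$-tuple satisfies $p_\pm(c,d) \notin [a,b]$ for all $12$ labelings simultaneously, which holds because each is an open dense condition. I would remark that this codimension count matches the expected value $N_{(P_r;d)} = (r-3)(d+1)+5$ from Section 4 at $r=3$ (giving $5$), consistent with Proposition \ref{prop7.1} confirming that conjecture for $r=3$, and I would credit Florian Frick for the observation as stated in the introduction.
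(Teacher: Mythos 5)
There is a genuine gap in your upper bound, and it is precisely the step you describe as ``follows quite mechanically.'' Projecting onto a single coordinate line $L$ and annihilating $c_{1,1}$ (hence $c_{1,2}=\overline{c_{1,1}}$) via Theorem \ref{thm3.1} with $G=\mathbb{Z}_3$ produces three points $f(x_0),f(x_1),f(x_2)$ whose $L$-coordinates agree --- i.e.\ three points in an affine \emph{hyperplane}, not a $2$-flat (unless $d=3$), and in no way an equilateral triangle. Your claim that three non-coincident points ``that are the orbit of a function on $\mathbb{Z}_3$ with a nontrivial Fourier mode'' automatically form an equilateral triangle is false: any three distinct points have a nontrivial mode, and Lemma \ref{lem2.1}(a) requires precisely that \emph{one of} $c_1,c_2$ vanish while the other does not, which is a genuine codimension-$2$ condition, not an automatic consequence of being non-Tverberg. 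The deeper obstruction is that the $\mathbb{Z}_3$-Fourier machinery of Sections 2--4 necessarily pins down the $2$-plane in which the $r$-gon lives (it is built into the choice of $\sigma$ in \eqref{eq:3.9}); forcing $P_3$ parallel to a fixed plane $U\subset\mathbb{R}^d$ costs $N=T(3,d)-2=2d+1$ points by Theorem \ref{thm4.1}(a), which grows with $d$ and does not give $5$. To get the dimension-independent value $5$, the paper abandons the abelian Fourier framework entirely: it uses the $\mathfrak{S}_3$-equivariant map $\mathcal{D}:(\Delta_4^{\times 3})_\Delta\to W$ built from the three pairwise distances $\|f(x_i)-f(x_j)\|$ and the standard $2$-dimensional representation $W$ of $\mathfrak{S}_3$, where $\mathcal{D}$ must vanish by \cite[Cor.~3.4]{O87}, and the distances cannot all be zero since $T(3,d)>5$. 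This equidistance approach is coordinate-free and is the key idea your proposal misses.

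Your lower bound also has a smaller error: the apex of an equilateral triangle on base $\{c,d\}$ in $\mathbb{R}^d$ with $d>2$ is not one of two points $p_\pm$ but a full $(d-2)$-sphere, so the condition you phrase as codimension-$1$ is not stated correctly. The dimension count still works (a generic segment misses a generic $(d-2)$-sphere since $1+(d-2)<d$), but it needs to be said that way. The paper's lower bound is different and shorter: take $4$ coplanar points with no $\Delta_2$-partition (such exist in abundance since $N_{\Delta_2}=T(3,2)-2=5$ by Theorem \ref{thm1.1}), and observe that the set of $4$-tuples admitting a $\Delta_2$-partition is closed, so a small perturbation of the bad coplanar configuration into $\mathbb{R}^d$ still has no $\Delta_2$-partition, giving a positive-measure family of counterexamples. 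Your concluding remark relating $5$ to the conjectured $N_{(P_r;d)}=(r-3)(d+1)+5$ at $r=3$ is correct and matches the paper's Section 4 discussion.
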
 

Unlike the Topological Tverberg theorem, however, these constructions fail for all $r>3$, including when $r$ is a prime power. Namely, suppose that $N$ and $d$ are arbitrary and that we seek a $\Delta_{r-1}$--partition for some $f:\Delta_{N-1}\rightarrow \mathbb{R}^d$. The standard approach here would be to define $D: (\Delta_{N-1}^{\times r})_\Delta \rightarrow \mathbb{R}^{\binom{r}{2}}$ by $x=(x_1,\ldots, x_r)\mapsto\|f(x_i)-f(x_j)\|$ for each $1\leq i<j\leq k$. Permutation of the indices produces an action of the symmetric group $\mathfrak{S}_r$ on $(\Delta_{N-1}^{\times r})_\Delta$ which is free, as well as an action on $\mathbb{R}^{\binom{r}{2}}=\{(x_{i,j})_{1\leq i<j\leq k}\mid x_{i,j}\in \mathbb{R}\}$ which is not. As usual, one wants the image of $(\Delta_{N-1})^{\times r})_\Delta$ to intersect the thin diagonal $\delta$ of $\mathbb{R}^{\binom{r}{2}}$ consisting of all $x=(x_{i,j})_{1\leq i<j\leq k}$ with equal $x_{i,j}$. Composing $D$ with the projection of $\mathbb{R}^{\binom{r}{2}}$ onto the orthogonal complement $W:=\delta^\perp$ produces a $\mathfrak{S}_r$-equivariant map $\mathcal{D}: (\Delta_{N-1}^{\times r})_\Delta \rightarrow W$, a zero of which represents some $x_1,\ldots, x_r\in \Delta_N$ with pairwise disjoint support for which the distances $\|f(x_i)-f(x_j)\|$ are equal for all $1\leq i <j\leq k$. Thus one has a $\Delta_{r-1}$--partition provided this distance is non--zero and a Tverberg $r$--partition otherwise. In particular, dimensional considerations yield the following: 

\begin{conjecture} Let $r\geq 3$. Then $N_{(\Delta_{r-1};d)}=\binom{r+1}{2}-1$ for all $d\geq r-1$. \end{conjecture}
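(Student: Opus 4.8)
The plan is to treat the two inequalities in $N_{(\Delta_{r-1};d)}=\binom{r+1}{2}-1$ separately. The bound $N_{(\Delta_{r-1};d)}\ge\binom{r+1}{2}-1$ should follow from a general-position count entirely parallel to the tightness half of Theorem~\ref{thm3.1} and to [26]. Given $N$ points $f(v_1),\dots,f(v_N)$ in $\mathbb R^d$, a partition into $A_1,\dots,A_r$ (not necessarily exhausting all $N$ points) with $|A_i|=a_i$ and $\sum a_i\le N$, together with a choice of $x_i\in\Conv(A_i)$, contributes at most $\sum_i(a_i-1)\le N-r$ parameters, while the demand that the $\binom r2$ squared distances $\|f(x_i)-f(x_j)\|^2$ all coincide imposes $\binom r2-1$ polynomial conditions, which are independent for $f$ in general position precisely because $d\ge r-1$ leaves enough room to realize a regular $(r-1)$-simplex. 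Hence, when $N-r<\binom r2-1$, i.e.\ $N<\binom{r+1}{2}-1$, no $\Delta_{r-1}$-partition (degenerate or not) exists for almost any affine $f$, and, as in Remark~\ref{rem1}, the same holds typically in the continuous category. The only genuine work here is verifying the independence of the distance conditions, e.g.\ by exhibiting one configuration at which the relevant Jacobian has full rank $\binom r2-1$.

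For the reverse inequality, set $N=\binom{r+1}{2}-1$ and let $f:\Delta_{N-1}\to\mathbb R^d$ be generic. Form, as sketched above, the $\mathfrak S_r$-equivariant distance map $\mathcal D:(\Delta_{N-1}^{\times r})_\Delta\to W$, where $W=\delta^\perp\subseteq\mathbb R^{\binom r2}$ (coordinates indexed by pairs $\{i,j\}$) has $\dim W=\binom r2-1$; a zero of $\mathcal D$ supplies $x_1,\dots,x_r$ from pairwise disjoint faces at which all $\|f(x_i)-f(x_j)\|$ agree. Since $\binom{r+1}{2}-1<T(r,d)$ for every $d\ge r-1$, a generic $f$ has no Tverberg $r$-partition, so this common distance is nonzero and the $f(x_i)$ genuinely span a regular $(r-1)$-simplex. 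Now $(\Delta_{N-1}^{\times r})_\Delta$ is exactly $(N-r)=\bigl(\binom r2-1\bigr)$-dimensional and $(N-r-1)=\bigl(\binom r2-2\bigr)$-connected, hence it is $\dim S(W)$-connected; thus, were the $\mathfrak S_r$-action on $S(W)$ free, Dold's theorem would immediately rule out an equivariant map $(\Delta_{N-1}^{\times r})_\Delta\to S(W)$, forcing the desired zero. For $r=3$ this is exactly what happens: restricting to $\mathbb Z_3\subset\mathfrak S_3$, $W$ is the standard $2$-dimensional representation and $\mathbb Z_3$ acts freely on $S(W)=S^1$, which recovers Proposition~\ref{prop7.1}.

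The main obstacle is that the $\mathfrak S_r$-action on $S(W)$ is not free once $r\ge4$, and---crucially---passing to a subgroup does not help. A transposition always fixes a positive-dimensional subspace of $W$, so $\mathfrak S_r$ is never free on $S(W)$; and for $r>3$ neither a cyclic subgroup $\mathbb Z_r$ nor any elementary abelian $p$-subgroup of $\mathfrak S_r$ acts freely on $S(W)$ either. Indeed, for $r$ an odd prime one has $\mathbb R^{\binom r2}\cong\mathbb R[\mathbb Z_r]^{\oplus(r-1)/2}$ as a $\mathbb Z_r$-module, whose fixed subspace is $(r-1)/2$-dimensional, so $W=\delta^\perp$ retains a fixed subspace of dimension $(r-1)/2-1>0$ when $r\ge5$, and the composite prime-power cases are no better. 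Consequently the Dold/Volovikov-type obstructions collapse, and a proof of the conjectured upper bound would require genuinely finer tools: a relative Fadell--Husseini index computation that accounts for the singular set of the $\mathfrak S_r$-action on $S(W)$, an equivariant obstruction-theoretic Euler-class argument, or a direct geometric construction. The fact that even prime powers gain nothing here---in contrast to Tverberg's theorem and to Theorem~\ref{thm1.3}---is exactly why we record the statement only as a conjecture, and it parallels the now-known failure of the topological Tverberg conjecture outside the prime-power range.
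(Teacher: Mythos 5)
Your proposal correctly recognizes that the statement is a conjecture and that neither inequality admits a complete proof by the methods of the paper; your reasoning closely parallels Section~7. The lower-bound parameter count matches the paper's allusion to ``dimensional considerations,'' and your treatment of $r=3$ (restricting to $\mathbb Z_3\subset\mathfrak S_3$, with $W$ the standard representation and $\mathbb Z_3$ acting freely on $S(W)\cong S^1$) is exactly how the paper proves Proposition~\ref{prop7.1}. Where you diverge is in explaining \emph{why} the obstruction-theoretic route collapses for $r\geq 4$. You argue representation-theoretically: decomposing $\mathbb R^{\binom r2}\cong\mathbb R[\mathbb Z_r]^{\oplus(r-1)/2}$ for $r$ an odd prime, you find a $((r-1)/2-1)$-dimensional $\mathbb Z_r$-fixed subspace inside $W=\delta^\perp$, so no relevant subgroup of $\mathfrak S_r$ acts freely on $S(W)$ and the Dold/Volovikov machinery has no purchase. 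The paper instead gives a direct and in fact stronger argument: it exhibits the \emph{existence} of a nonvanishing $\mathfrak S_r$-equivariant map $(\Delta_{N-1}^{\times r})_\Delta\to W$ by applying the construction to a generic affine $f:\Delta_{N-1}\to\mathbb R^{r-2}$ with $N=\binom{r+1}{2}-1$. Since $N<T(r,r-2)$ rules out a Tverberg $r$-partition and a regular $(r-1)$-simplex cannot fit in $\mathbb R^{r-2}$, the resulting $\mathcal D$ can have no zero. Your fixed-subspace computation shows the standard \emph{sufficient} condition for a zero fails, whereas the paper shows outright that the zero need not exist; the two observations are complementary, and both support recording the statement only as a conjecture. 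One small caution on your lower-bound sketch: the claimed independence of the $\binom r2-1$ distance conditions for generic $f$ is plausible but is the genuine content of that half, and should be checked against degenerate choices of the $A_i$ (e.g.\ singletons) rather than just asserted via a single full-rank configuration.
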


	The map $\mathcal{D}$ above must vanish when $r=3$ and $N=5$, since in this case $W$ is the standard representation of $\mathfrak{S}_3$ (see, e.g., [19, Corollary 3.4]). The distances $\|f(x_i)-f(x_j)\|$ are all equal and cannot be zero because $T(3,d)>5$, so the $f(x_i)$ are the vertices of a regular 2--simplex. On the other hand, that $N_{\Delta_2;d}\geq5$ follows by considering 4 coplanar points in $\mathbb{R}^d$ and applying $N_{\Delta_2}=5$. Thus  $N_{(\Delta_2;d)}=5$ as well. 
	
	If $N=\binom{r+1}{2}-1$ and $r\geq 4$, however, $\mathfrak{S}_r$--equivariant maps $(\Delta_{N-1}^{\times r})_\Delta \rightarrow W$ without zeros always exist. This can be seen  directly by considering the above construction as applied to almost any affine map $f:\Delta_{N-1} \rightarrow \mathbb{R}^{r-2}$. As $\binom{r+1}{2}-1<T(r,r-2)$, $f$ has no $r$--Tverberg partition, so the vanishing of the resulting $\mathcal{D}$ would yield a regular $(r-1)$--simplex in $\mathbb{R}^{r-2}$.  		
	
\subsection*{Acknowledgements} \noindent The authors thank the anonymous referee for many useful comments which improved the clarity and exposition of the manuscript, as well as for alerting the authors to connections with the square peg problem. The authors likewise thank Florian Frick and Pablo S\'oberon for helpful discussions and suggestions.

\bibliographystyle{plain}

\end{document}